
\documentclass{article}
\usepackage{amssymb}
\usepackage{amsmath}
\usepackage{amsthm}
\usepackage{amscd}
\usepackage{graphicx}
\usepackage{geometry}\geometry{a4paper,top=1.5cm,bottom=3.5cm,left=2cm,right=1cm}
\usepackage[breaklinks,pdfstartview=FitH,colorlinks,linktocpage,linkcolor=blue,citecolor=blue,urlcolor=blue]{hyperref}

\usepackage[cmtip,all]{xy}
\usepackage{upref}

\newtheorem{theorem}{Theorem}[section]
\newtheorem{lemma}[theorem]{Lemma}

\newtheorem{corollary}[theorem]{Corollary}

\theoremstyle{definition}

\newtheorem*{definition*}{Definition}
\newtheorem{example}[theorem]{Example}

\theoremstyle{remark}
\newtheorem{remark}[theorem]{Remark}
\newtheorem*{remark*}{Remark}

\numberwithin{equation}{section}

\DeclareMathOperator{\rank}{rk}

\DeclareMathOperator{\interior}{Int}

\DeclareMathOperator{\image}{Im}
\DeclareMathOperator{\kernel}{Ker}

\begin{document}

\large

\title{When is the set of embeddings finite up to isotopy?}

\author{Mikhail Skopenkov}

\date{}
\maketitle

\begin{abstract}
Given a manifold $N$ and a number $m$, we study the following question: \emph{is the set of isotopy classes of embeddings $N \to S^m$ finite}?
In case when the manifold $N$ is a sphere the answer was given by A. Haefliger in 1966.
In case when the manifold $N$ is a disjoint union of spheres the answer was given by
D. Crowley, S. Ferry and the author in 2011.
We consider the next natural case when $N$ is a product of two spheres. In the following theorem, $FCS (i,j)\subset \mathbb{Z}^2$ is a specific set depending only on the parity of $i$ and~$j$ which is defined in the paper.

\textbf{Theorem.} Assume that $m>2p+q+2$ and $m<p+3q/2+2$. Then the set of $C^1$-isotopy classes of $C^1$-smooth embeddings $S^p \times S^q \to S^m$ is infinite if and
only if either $q+1$ or $p+q+1$ is divisible by $4$, or there exists
a point $(x,y)$ in the set $FCS (m-p-q,m-q)$ such that $(m-p-q-2)x+(m-q-2)y=m-3$.


Our approach is based on a group structure on the set of embeddings and a new exact sequence, which in some sense reduces the classification of embeddings $S^p \times S^q \to S^m$ to the classification of embeddings $S^{p+q} \sqcup S^q \to S^m$ and $D^p \times S^q \to S^m$.
The latter classification problems are reduced to homotopy ones, which are solved rationally.

\smallskip
\noindent{\bf Keywords}: smooth manifold,
embedding,
isotopy,
knotted torus,
surgery,
knot.

\noindent{\bf 2000 MSC}: 57R52, 57R40; 57R65.
\end{abstract}

\maketitle

\footnotetext[0]{
This is an improved version of the paper published in Intern. J. Math 26:7 (2015), 28 pp.\\
The article was prepared within the framework of the Academic Fund Program at the National Research University Higher School of Economics (HSE) in 2015-2016 (grant No 15-01-0092)  and supported within the framework of a subsidy granted to the HSE by the Government of the Russian Federation for the implementation of the Global Competitiveness Program.
During the work on this paper the author received support
also from ``Dynasty'' foundation and from the Simons--IUM fellowship. 
}

\section{Introduction}\label{sect1}

This paper is on the classification of embeddings of
higher-dimensional manifolds, see \cite{Sko07L} for a recent survey.
This generalizes the subject of classical knot theory.
In general one can hope only to reduce the isotopy classification problem to problems of homotopy theory \cite{Hae66A, Hae66C, Kos90, Le65}. Sometimes the latter can be solved but finding explicit classification is hard.

Given a manifold $N$ and a number $m$, we study the following simpler question: \emph{is the set of isotopy classes of embeddings $N \to S^m$ finite}?
This question is motivated by analogy to rational homotopy theory founded by J.P.~Serre, D.~Sullivan and D.~Quillen \cite{GM81} and rational classification of link maps by U.~Koschorke \cite{Kos90, HaKa98}.
We give answers for simplest manifolds $N$: spheres, disjoint unions of spheres (known before) and products of two spheres (new).
Our main result (Theorem~\ref{th3} below) is an exact sequence, which in some sense reduces the classification of embeddings $S^p \times S^q \to S^m$ to the classification of embeddings $S^{p+q} \sqcup S^q \to S^m$ and $D^p \times S^q \to S^m$. This provides much information about the set of isotopy classes of embeddings $S^p \times S^q \to S^m$ including a finiteness criterion (Theorem~\ref{th2} below).
Some results for general manifolds $N$ are available only in so-called metastable dimension 
\cite{Sko07L, Kl05}.
Throughout the paper we work in $C^1$-smooth category.

This paper concludes the series of papers~\cite{CRS07,CRS08,CFS11}. 
It is independent of previous ones in the sense that it uses statements from~\cite{CFS11} but neither definitions nor methodology from any of them.

\subsection*{Knots and links} For {\it knots} $S^q\to S^m$ in codimension at least $3$
(i.e., $m>q+2$) the answer to the posed question is given by A.~Haefliger:

\begin{theorem} \label{th1} \textup{\cite[Corollary~6.7]{Hae66A}} Assume
that $m> q+2$. Then the set of smooth isotopy classes of
smooth embeddings $S^q\to S^m$ is infinite if and only if $m < {3q}/{2}+2$ and $q+1$ is divisible by $4$.
\end{theorem}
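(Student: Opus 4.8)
The plan is to convert the question into a rational-homotopy computation, in the same spirit in which the rest of the paper treats its main theorem, using Haefliger's machinery from \cite{Hae66A}. Since $m-q\ge 3$, connected sum of knots makes the set $C^q_m$ of isotopy classes of embeddings $S^q\to S^m$ into a finitely generated abelian group (a standard consequence of Haefliger's exact sequence, whose other terms --- homotopy groups of spheres, Stiefel manifolds and $SO$, and surgery/Witt groups --- are all finitely generated). Hence ``$C^q_m$ is infinite'' is equivalent to $\operatorname{rank}C^q_m\ge 1$, i.e.\ to $C^q_m\otimes\mathbb{Q}\neq 0$, and it suffices to compute this rational vector space. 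Moreover, Haefliger's unknotting theorem gives $C^q_m=0$ whenever $m\ge 3q/2+2$ (equivalently $2m\ge 3q+4$), so the set is a single point there; this already proves the ``only if'' direction whenever $m\ge 3q/2+2$. From now on assume $q+3\le m<3q/2+2$.

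Now rationalize Haefliger's exact sequence. The non-knot terms are: stable homotopy groups of spheres and homotopy groups of Stiefel manifolds and of the linking sphere $S^{m-q-1}$, which are finite by Serre's theorem outside the Euler and Hopf dimensions; homotopy groups $\pi_*(SO)$, finite outside dimensions $\equiv 3\pmod 4$; and surgery/Witt groups $L_*(\mathbb{Z})$, finite outside dimensions $\equiv 0\pmod 4$. A short case analysis in the range $q+3\le m<3q/2+2$ shows that every such term which can be rationally nonzero and which is adjacent to $C^q_m$ in the sequence (namely $\pi_q(SO)$, the group $L_{q+1}(\mathbb{Z})$, and, at the edge value $m=3(q+1)/2$, the Hopf invariant $\pi_q(S^{(q+1)/2})$) is in fact nonzero rationally \emph{precisely} when $q+1\equiv 0\pmod 4$. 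Therefore, if $q+1\not\equiv 0\pmod 4$ then all groups surrounding $C^q_m$ in the rationalized sequence vanish, so $C^q_m\otimes\mathbb{Q}=0$ and the set is finite. This completes the ``only if'' direction.

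For the converse one must show that when $q+1\equiv 0\pmod 4$ and $q+3\le m<3q/2+2$ the relevant free class genuinely survives to $C^q_m$, i.e.\ the adjacent boundary maps in Haefliger's sequence are rationally trivial. I would do this by exhibiting the embeddings directly: Haefliger's knotted $S^{4k-1}\hookrightarrow S^{6k}$ --- together with the further infinite-order knots of \cite{Hae66A} realizing every admissible $m$ --- is of infinite order, its nontriviality detected by a signature-type $\mathbb{Q}$-valued invariant (essentially the signature of a Seifert form), which is defined exactly because the Seifert surface has dimension $q+1\equiv 0\pmod 4$ and so a symmetric middle intersection form. Connected sums of such an embedding then yield infinitely many pairwise non-isotopic embeddings, so $\operatorname{rank}C^q_m\ge 1$ and the set is infinite.

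The hard part will be this last step together with the case analysis that precedes it: one has to verify that throughout the whole interval $q+3\le m<3q/2+2$ --- not merely at the edge $m=3(q+1)/2$ --- the pertinent homotopy (or Witt) class really appears in Haefliger's exact sequence and is not annihilated by the neighbouring maps. This is precisely where Haefliger's explicit identification of the maps in his sequence, and his explicit construction of the knots carrying the free part, are indispensable, and where no purely formal rational argument will do.
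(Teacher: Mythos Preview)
The paper does not prove Theorem~\ref{th1}; it is quoted verbatim from Haefliger \cite[Corollary~6.7]{Hae66A} and used as a black box. So there is no ``paper's own proof'' to compare against.

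Your outline is in the spirit of Haefliger's own argument --- one puts $C^q_m$ into his long exact sequence and reads off the rational information --- but a few details are off. The groups adjacent to $C^q_m$ in Haefliger's sequence are not Witt groups $L_*(\mathbb{Z})$; they are relative homotopy groups of the pair $(G_{m-q},SO_{m-q})$ (equivalently, homotopy of the fiber $G_{m-q}/SO_{m-q}$), together with $\pi_*(SO,SO_{m-q})$. Their rational homotopy is indeed controlled by Serre and by the known rational homotopy of $SO$, and this is where the condition $4\mid q+1$ emerges. Your ``short case analysis'' is therefore pointing at the right objects but naming them incorrectly; as written it could not be checked.

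For the ``if'' direction you are right that the real content is Haefliger's explicit construction of infinite-order knots $S^{4k-1}\hookrightarrow S^{6k}$ and, more generally, the identification of the free summand via his sequence throughout the range $q+2<m<3q/2+2$. This is not a consequence of the rational vanishing argument alone --- one must know that the boundary maps do not kill the free class --- and Haefliger establishes this by computing the maps in his sequence. Your sketch correctly flags this as the hard step, but does not supply it; citing \cite{Hae66A} for both the sequence and the identification of the maps (as the present paper does) is the honest route.
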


The classification of (\emph{partially}) \emph{framed knots} $D^p\times S^q\to S^m$ is closely related to the classification of knots. 

\begin{theorem}\label{th1f}
\textup{\cite[Corollary~1.14]{CFS11}}
Assume that $m>q+2$, $1\le p\le m-q$. Then the set of smooth isotopy classes of smooth embeddings $D^p\times S^q\to S^m$ is infinite if and only if one of the following conditions holds:
\begin{itemize}
\item $4\,|\,q+1$ and $m<p+3q/2+1$;
\item $2\,|\,q+1$ and $m=2q+1$;
\item $2\,|\,q$ and $m=p+2q$.
\end{itemize}
\end{theorem}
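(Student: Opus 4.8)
The plan is to classify embeddings $f\colon D^p\times S^q\to S^m$ by recording the ordinary knot $g\colon S^q\to S^m$ obtained by restricting $f$ to the core sphere $\{0\}\times S^q$, together with the ``normal framing'' by which $f$ thickens $g$. Since $D^p\times S^q$ is the \emph{trivial} $D^p$--bundle over $S^q$, the tubular neighbourhood theorem shows that an extension of a fixed $g$ to an embedding of $D^p\times S^q$ is, up to isotopy rel $g$, a homotopy class of trivialized rank--$p$ subbundles of the normal bundle $\nu(g)$; such a subbundle is a section of the associated bundle with fiber the Stiefel manifold $V_{m-q,p}=SO(m-q)/SO(m-p-q)$ of $p$--frames in $\mathbb R^{m-q}$, and it determines in particular the normal bundle of the image $f(D^p\times S^q)$. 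First I would organize these data into an exact sequence of groups
\[
\pi_q(V_{m-q,p})\ \longrightarrow\ E^m(D^p\times S^q)\ \xrightarrow{\ \mathrm{res}\ }\ C^q_m\ \xrightarrow{\ \partial\ }\ \pi_{q-1}(V_{m-q,p}),
\]
where $C^q_m$ is Haefliger's group of knots $S^q\to S^m$ (a group under connected sum since $m\ge q+3$), the set $E^m(D^p\times S^q)$ of isotopy classes of embeddings $D^p\times S^q\to S^m$ is a group by connected sum of cores with addition of framings, $\partial$ is the obstruction to equipping $\nu(g)$ with a trivialized rank--$p$ subbundle, and the first arrow records the framings of the unknotted core. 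Proving exactness uses the isotopy extension theorem and, when $m<2q+1$ so that $\nu(g)$ need not be trivial, a twisted variant of the framing count; erecting this sequence and checking the group structure is, I expect, the most delicate foundational step.

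Granting the sequence, $E^m(D^p\times S^q)$ is infinite if and only if $\ker\partial\subseteq C^q_m$ is infinite or the image of $\pi_q(V_{m-q,p})$ in $E^m(D^p\times S^q)$ is infinite. For the knot part I would invoke Haefliger's Theorem~\ref{th1}: $C^q_m$ is infinite exactly when $4\mid q+1$ and $m<3q/2+2$, and there its infinite part is detected by a $\mathbb Z$--valued invariant; one verifies that $\partial$ is trivial on this infinite part (the Haefliger knots have trivializable normal bundle), so they all lift and the whole range $m<3q/2+2$ contributes. For the framing part, the first arrow has image a quotient of $\pi_q(V_{m-q,p})$ by the image of a boundary homomorphism (from $\pi_{q+1}$ and from the self-isotopies of the unknotted core), so, discarding torsion, it reduces to deciding when $\pi_q(V_{m-q,p})\otimes\mathbb Q\neq 0$ and checking that the resulting class survives.

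The computation of $\pi_q(V_{m-q,p})\otimes\mathbb Q$ is standard but case--heavy, because a Stiefel manifold is only rationally a product of spheres, the even--dimensional fibers of the tower $V_{m-q,p}\to V_{m-q,p-1}\to\cdots$ being absorbed by Euler classes over odd--dimensional bases (equivalently the rational homotopy is that of $SO(m-q)$ relative to $SO(m-p-q)$). Sorting out the four cases by the parities of $m-q$ and $p$, one finds that a rational homotopy class in degree $q$ surviving into $E^m(D^p\times S^q)$ arises in exactly three ways, giving the three conditions of the theorem: the Euler class of the normal bundle of the image when $m=p+2q$ and $q$ is even (the normal bundle, of rank $q$ over $S^q$, then takes infinitely many values); the Euler-class generator $S^{m-q-1}=S^q$ of the Stiefel manifold when $m=2q+1$ and $q$ is odd; and, when $4\mid q+1$, a contribution over the whole range $m<p+3q/2+1$, coming either from the Haefliger knots (for $m<3q/2+2$) or from a metastable rational sphere of the Stiefel manifold in dimension $(q+1)/2$ contributing in degree $q=2\cdot(q+1)/2-1$ (for the remaining $m$). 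Assembling the knot and framing contributions gives the claimed equivalence; for the converse one exhibits in each case the corresponding infinite family --- the Haefliger knots times the standard thickening, the embeddings realizing all normal bundles, or the normal framings of the unknot --- and verifies with the above invariants that its members stay distinct under the operations in the sequence. The genuine obstacle, beyond building the exact sequence, is controlling the boundary homomorphisms $\partial$: a priori an infinite homotopy group can be swallowed into the image of such a map, so they must be pinned down rationally in every parity case, and it is precisely this bookkeeping that forces the numerology distinguishing $m=p+2q$ from $m=2q+1$ and the bound $m<p+3q/2+1$.
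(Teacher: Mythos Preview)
The paper does not give its own proof of Theorem~\ref{th1f}; it is quoted from \cite{CFS11}. But the approach you outline is exactly the one the paper itself supplies the tools for: your exact sequence
\[
\pi_q(V_{m-q,p})\ \longrightarrow\ E^m(D^p\times S^q)\ \longrightarrow\ E^m(S^q)\ \longrightarrow\ \pi_{q-1}(V_{m-q,p})
\]
is precisely Theorem~\ref{th4} of the present paper (there with a sketch of proof), and the rational splitting you implicitly use to read off finiteness from the two outer terms is the isomorphism
$E^m(D^p\times S^q)\otimes\mathbb{Q} \cong E^m(S^q)\otimes\mathbb{Q}\oplus \pi_q(V_{m-q,p})\otimes\mathbb{Q}$
which the paper invokes at the start of \S\ref{sect3}, citing \cite[Theorem~1.13]{CFS11}. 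Together with Theorem~\ref{th1} for the knot term and the standard rational homotopy of Stiefel manifolds for the framing term, this yields the criterion. So your plan coincides with the intended argument.

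The one point you flag but do not resolve --- that the boundary maps might swallow the infinite part --- is exactly the content of the rational splitting above, and is the genuine step carried out in \cite{CFS11}. Your heuristic that Haefliger's knots lift because their normal bundles are trivial is correct in spirit, but note that the obstruction $Ob$ of Theorem~\ref{th4} is to a $p$-\emph{framing}, not merely to triviality of $\nu(g)$, so the verification requires a little more than you indicate.
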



The classification of {\it links} $S^p\sqcup S^q\to S^m$ is the next natural problem after the classification of knots. For $m\ge {2}(p+q)/3+2$ there is an explicit description of the isotopy
classes of links $S^p\sqcup S^q\to S^m$ ``modulo'' knots $S^p\to S^m$
and $S^q\to S^m$ in terms of homotopy groups of spheres and Stiefel
manifolds; see \cite[Theorem 10.7]{Hae66C}, \cite[Theorem~1.1]{Sko08P}, \cite{Ne82}.
In codimension at least $3$ there is an
exact sequence involving the set of isotopy classes of links and certain
homotopy groups \cite[Theorem~1.3]{Hae66C}.
This sequence allows to obtain the following finiteness criterion by D.~Crowley, S.~Ferry and the author.
The criterion involves certain \emph{finiteness-checking} sets $FCS (i,j)\subset \mathbb{Z}^2$ which depend only on the parity of $i$, $j$ and which are defined in Table~\ref{formal} below.
A part of each set is drawn in the table; the rest of the set is obtained by obvious periodicity.

\begin{theorem}\label{th1b} \textup{\cite[Theorem~1.5]{CFS11}} Assume that $p,q< m-2$. Then the set of smooth isotopy classes of smooth embeddings $S^p\sqcup S^q\to S^m$, whose components are unknotted, is infinite if and only if
there exists a point $(x,y)\in FCS (m-p,m-q)$ such that $(m-p-2)x+(m-q-2)y=m-3$.
\end{theorem}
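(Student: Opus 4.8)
The plan is to endow the set of links with its natural group structure, reduce the finiteness question to a single rational homotopy computation via the Haefliger exact sequence, and then match that computation against the combinatorial set $FCS$. First I would put on the set $C^m_{p,q}$ of isotopy classes of links $S^p\sqcup S^q\to S^m$ the abelian group structure given by componentwise embedded connected sum, which is well defined and associative in codimension $\ge 3$ (our hypothesis $p,q<m-2$). Forgetting each component gives homomorphisms $C^m_{p,q}\to C^m_p$ and $C^m_{p,q}\to C^m_q$ to the knot groups, and the links with unknotted components form precisely the subgroup $L=\kernel(C^m_{p,q}\to C^m_p)\cap\kernel(C^m_{p,q}\to C^m_q)$. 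All these groups are finitely generated abelian, since Haefliger's exact sequence \cite[Theorem~1.3]{Hae66C} builds them from homotopy groups of spheres and Stiefel manifolds; hence $L$ is infinite if and only if $L\otimes\mathbb{Q}\ne 0$. Rationalizing Haefliger's sequence and using exactness, the unknottedness condition removes the two knot-group terms, and I would identify $L\otimes\mathbb{Q}$ with the rational homotopy of the pure-linking space, namely the complement in $S^m$ of a split (trivial) link.

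This complement is homotopy equivalent to a wedge of spheres: by Alexander duality its reduced homology is $\mathbb{Z}$ in degrees $m-p-1$, $m-q-1$ and $m-1$, so it is $S^{m-p-1}\vee S^{m-q-1}\vee S^{m-1}$, the first two factors being the complements of the individual unknotted components. The linking invariants of the link---the classical linking coefficient together with the higher Milnor-type invariants that persist outside the metastable range---are the homotopy class of the attaching data in degree $m-2$, and I would show $L\otimes\mathbb{Q}\cong\pi_{m-2}(\text{complement})\otimes\mathbb{Q}$. Since the summand $S^{m-1}$ contributes only in homotopy degree $\ge m-1>m-2$, it is invisible here, and by the theorems of Serre and Hilton--Milnor the remaining rational homotopy $\pi_*(S^{m-p-1}\vee S^{m-q-1})\otimes\mathbb{Q}$ is the free graded Lie algebra on two generators $g_1,g_2$ of Whitehead-product degrees $m-p-2$ and $m-q-2$ (one less than the sphere dimensions, since the bracket lowers homotopy degree by one). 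A Lie monomial using $x$ copies of $g_1$ and $y$ copies of $g_2$ lies in $\pi_{(m-p-2)x+(m-q-2)y+1}$, so $L\otimes\mathbb{Q}$ is exactly the degree-$(m-3)$ part of this free graded Lie algebra.

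It then remains to decide when that part is nonzero. It is spanned by Lie monomials of bidegree $(x,y)$ with $x,y\ge 0$ and $(m-p-2)x+(m-q-2)y=m-3$, and whether a given bidegree carries a nonzero element depends, through graded (anti)commutativity (for instance $[g,g]=0$ when $g$ has even degree) and the Jacobi identity, only on the parities of $m-p$ and $m-q$; the precise dimension in each bidegree is given by the graded analogue of Witt's necklace formula. I would prove that the set of bidegrees carrying a nonzero element is exactly $FCS(m-p,m-q)$ as drawn in Table~\ref{formal}, together with its periodic continuation. The calibration point is $(x,y)=(1,1)$: the single bracket $[g_1,g_2]$ gives the equation $2m-p-q-4=m-3$, that is $m=p+q+1$, which is exactly the range in which the ordinary integral linking number survives; the larger points of $FCS$ record the higher iterated-bracket invariants, and the periodicity reflects that bracketing in pairs of like generators neither raises nor lowers the relevant parity.

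The main obstacle is the reduction in the first paragraph: one must extract from Haefliger's sequence the precise statement that, after imposing unknottedness, finiteness of $L$ is governed by the single rational homotopy group $\pi_{m-2}$ of the linking complement, with the knot-group terms and the Stiefel-manifold framing terms (which are what would otherwise produce separate ``$4\mid\cdot$'' conditions, as in Theorems~\ref{th1} and~\ref{th1f}) either killed or absorbed into $FCS$, and with all torsion shown to be irrelevant to finiteness. The secondary difficulty is the bookkeeping of the last paragraph: the graded-Lie-algebra dimension count must be carried out in each of the four parity classes of $(m-p,m-q)$ and shown to reproduce $FCS$ and its periodicity. This is routine but error-prone, and it is essential to pin down the generator degrees $m-p-2$, $m-q-2$ and the target degree $m-3$ exactly, since these are precisely the coefficients and the constant appearing in the criterion.
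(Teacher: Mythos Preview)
Your general framework is right—rationalize, pass through Haefliger's exact sequence, land in the free graded Lie superalgebra $L$ on generators $P,Q$ of degrees $m-p-2$ and $m-q-2$ via Hilton--Milnor, then match bidegrees against $FCS$. But the identification you propose, $E^m_{\mathrm U}(S^p\sqcup S^q)\otimes\mathbb{Q}\cong\pi_{m-2}(S^{m-p-1}\vee S^{m-q-1})\otimes\mathbb{Q}\cong L_{m-3}$, is incorrect, and this is exactly the step you yourself flagged as the main obstacle. What Haefliger's sequence actually gives, after tensoring with $\mathbb{Q}$ and restricting to unknotted components, is
\[
\dots\longrightarrow L^{PQ}_{m-2}\longrightarrow E^m_{\mathrm U}(S^p\sqcup S^q)\otimes\mathbb{Q}\longrightarrow L^Q_{p-1}\oplus L^P_{q-1}\xrightarrow{\ w\ } L^{PQ}_{m-3}\longrightarrow\dots,
\]
where $w(u,v)=[u,P]+[v,Q]$ and $L^P,L^Q,L^{PQ}$ denote the subspaces of monomials containing at least one $P$, at least one $Q$, at least one of each. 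A short induction on bracket length shows $w$ is surjective in every degree, so the long sequence splits and $E^m_{\mathrm U}\otimes\mathbb{Q}\cong\ker w$. The rank is therefore
\[
\sum_{\substack{x,y\ge1\\(m-p-2)x+(m-q-2)y=m-3}}\bigl(d(x-1,y)+d(x,y-1)-d(x,y)\bigr),
\]
not $\sum d(x,y)$; the group $L^{PQ}_{m-3}$ is the \emph{target} of the relevant map, not the link group itself.

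These two quantities do not have the same positivity. Take $p=q=5$, $m=9$: both generators have degree $2$, the target degree is $6$, and the mixed bidegrees on the line are $(1,2)$ and $(2,1)$, each with $d=1$, so your formula predicts rank $2$. But $d(1,1)+d(2,0)-d(2,1)=1+0-1=0$ (here $[P,P]=0$ since $m-p-2$ is even) and symmetrically for $(1,2)$, so the true rank is $0$ and the set is finite—consistent with $(1,2),(2,1)\notin FCS(4,4)$. The correct combinatorial step is thus to decide when $m(x,y):=d(x-1,y)+d(x,y-1)-d(x,y)>0$; this is what the paper (following \cite{CFS11}) does by a parity-dependent case analysis using the graded Witt formula for $d(x,y)$, explicit checks for small $x$, and a Stirling-type estimate for $x,y\ge5$, and it is precisely this analysis that produces Table~\ref{formal}.
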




\begin{table}[th]
\caption{Definition of the finiteness-checking set $FCS (i,j)$ \cite[Table~1]{CFS11}}
\label{formal}
\begin{center}
\begin{tabular}{|p{4cm}|p{4cm}|p{4cm}|}
\hline
\multicolumn{3}{|p{12cm}|}{$FCS (i,j)$ is the set of pairs $(x,y)\in \mathbb{Z}\times\mathbb{Z}$ such that $x,y>0$ and at least one of the following conditions holds~---}\\[2pt]
\hline
\multicolumn{1}{|c|}{for $i,j$ even:} & \multicolumn{1}{|c|}{for $i$ odd, $j$ even:} & \multicolumn{1}{|c|}{for $i,j$ odd:}\\[2pt]
\hline
\begin{itemize}
\item $x=1$ and $y=1$;
\item $x=2$ and $2\,|\,y$;
\item $x=3$ and $y=3$;
\item $x=3$ and $y\ge5$;
\item $x\ge4$ and $y\ge4$;
\item $2\,|\,x$ and $y=2$;
\item $x\ge5$ and $y=3$.
\end{itemize}
&
\begin{itemize}
\item $x=1$ and $y=1$;
\item $x=2$ and $2\,|\,y+1$;
\item $x=3$ and $y\ge2$;
\item $x\ge4$ and $y\ge4$;
\item $4\,|\,x$ and $y=2$;
\item $4\,|\,x+1$ and $y=2$;
\item $x\ge5$ and $y=3$.
\end{itemize}
&
\begin{itemize}
\item $x=1$ and $y=1$;
\item $x=2$ and $4\,|\,y+2$;
\item $x=2$ and $4\,|\,y+3$;
\item $x\ge 3$ and $y\ge3$;
\item $4\,|\,x+2$ and $y=2$;
\item $4\,|\,x+3$ and $y=2$.
\end{itemize}\\
\hline
& & \\[2pt]
\multicolumn{1}{|c|}{\quad \includegraphics[width=3.5cm]{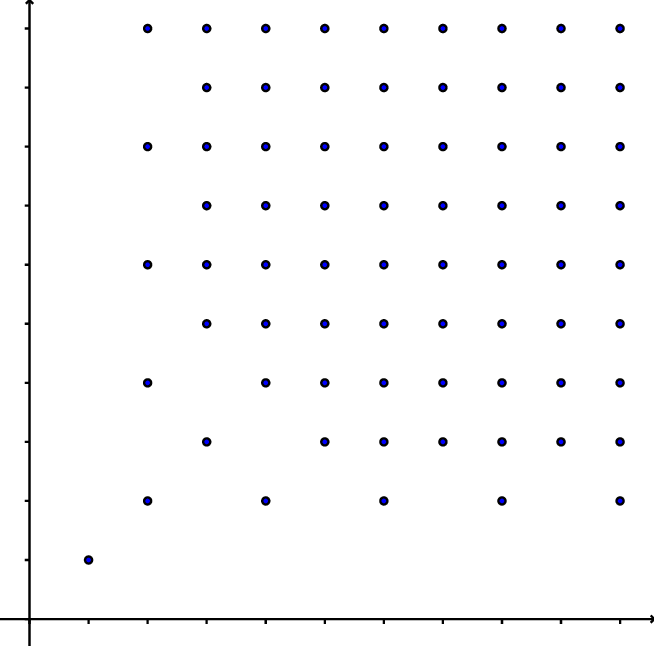} \quad} & \multicolumn{1}{|c|}{\quad \includegraphics[width=3.5cm]{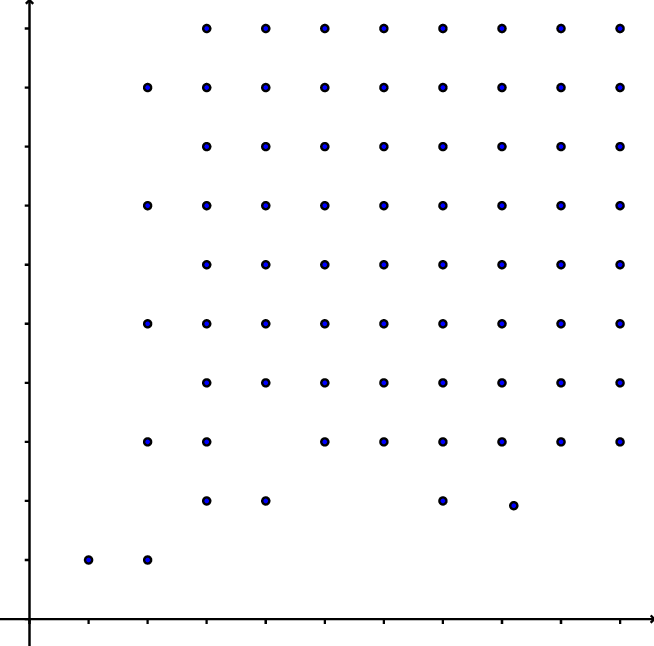} \quad} & \multicolumn{1}{|c|}{\quad \includegraphics[width=3.5cm]{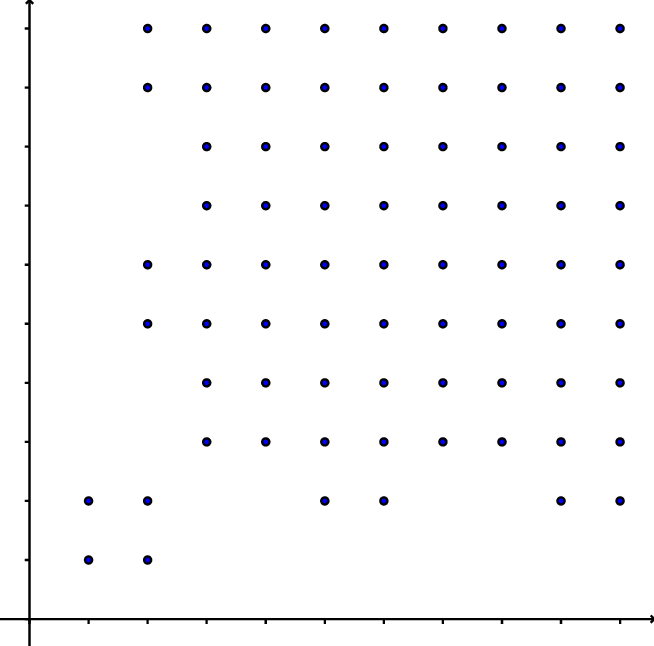} \quad} \\[5pt]
\hline
\multicolumn{3}{|p{12cm}|}{For $i$ even, $j$ odd the set $FCS (i,j)$ is obtained from $FCS (j,i)$ by the reflection with respect to the line $x=y$.}\\[2pt]
\hline
\end{tabular}
\end{center}
\end{table}

\subsection*{Knotted tori}

A natural next step (after link theory and the classification of embeddings of highly-connected manifolds) towards classification of embeddings of arbitrary manifolds is the classification of {\it knotted tori}, i.e., embeddings $S^p\times S^q\to S^m$.
The classification of knotted tori gives some insight or even precise information concerning arbitrary manifolds \cite{Sko07F}; see also Theorem~\ref{thconsum} below.
Many interesting examples of embeddings are knotted tori \cite{Hud63, MiRe71, Sko02, Sko10}.


There was known an explicit description of the set of isotopy classes of knotted tori ``modulo'' knots $S^{p+q}\to S^m$
in the {\it metastable} dimension $m\ge p+{3q}/{2}+2$, $p\le q$, in terms of homotopy groups of Stiefel manifolds
\cite{
Sko02, Sko08}. 
If $N$ is a closed $(p-1)$-connected $(p+q)$-manifold 
then until recent results \cite{CrSk08, Sko08Z, Sko08} no complete readily calculable descriptions of isotopy classes below the metastable dimension was known, in spite of the existence of interesting
approaches of Browder--Wall and Goodwillie--Weiss \cite{Wa65, GW99, CRS04}.

The main ``practical'' result of the paper is an
explicit criterion for the finiteness of the set of knotted tori
up to isotopy below the metastable dimension:

\begin{theorem} \label{th2}
Assume that $m > 2p+q+2$ and $m <
p+{3q}/{2}+2$. Then the set of isotopy classes of smooth
embeddings $S^p\times S^q\to S^m$ is infinite if and
only if at least one of the following conditions holds:
\begin{itemize}
\item $q+1$ or $p+q+1$ is divisible by $4$,
\item there exists a point $(x,y)\in FCS (m-p-q,m-q)$ such that $(m-p-q-2)x+(m-q-2)y=m-3$.
\end{itemize}
\end{theorem}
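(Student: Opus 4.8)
Write $E^m(N)$ for the group of smooth isotopy classes of smooth embeddings $N\to S^m$. The plan is to read off the rank of $E^m(S^p\times S^q)$ from the exact sequence of Theorem~\ref{th3}, which involves $E^m(S^{p+q}\sqcup S^q)$ and $E^m(D^p\times S^q)$. All the embedding sets occurring in that sequence are finitely generated abelian groups: for the knot groups this is Haefliger's theorem, for $E^m(D^p\times S^q)$ it is proved in \cite{CFS11}, for the link group $E^m(S^{p+q}\sqcup S^q)$ it follows from Haefliger's link theory in codimension $\ge3$, and hence for $E^m(S^p\times S^q)$ it follows from Theorem~\ref{th3}. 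Since a finitely generated abelian group is infinite if and only if it has positive rank, and rank is additive along exact sequences, after tensoring the sequence of Theorem~\ref{th3} with $\mathbb Q$ it is enough to compute the ranks of $E^m(S^{p+q}\sqcup S^q)$ and $E^m(D^p\times S^q)$ together with the ranks of the homomorphisms in the sequence.

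I would first identify when the two outer terms are infinite. The inequalities $2p+q+2<m<p+3q/2+2$ force $p<q$ and exclude both $m=2q+1$ and $m=p+2q$, so Theorem~\ref{th1f} gives that $E^m(D^p\times S^q)$ is infinite if and only if $4\mid q+1$ and $m<p+3q/2+1$. For the link group, the standard exact sequence of link theory --- restriction to the two components --- exhibits $E^m(S^{p+q}\sqcup S^q)$ as an extension of $E^m(S^{p+q})\times E^m(S^q)$ by the group of links with unknotted components, so its finiteness is equivalent to finiteness of $E^m(S^{p+q})$, of $E^m(S^q)$, and of that subgroup. Since $m<p+3q/2+2<3(p+q)/2+2$, Theorem~\ref{th1} applies to $S^{p+q}$ (its dimension hypothesis being automatic here); combining this with Theorem~\ref{th1} for $S^q$ and with Theorem~\ref{th1b}, we get that $E^m(S^{p+q}\sqcup S^q)$ is infinite if and only if $4\mid p+q+1$, or $4\mid q+1$ and $m<3q/2+2$, or there is a point $(x,y)\in FCS(m-p-q,m-q)$ with $(m-p-q-2)x+(m-q-2)y=m-3$. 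The two ``knot'' conditions here are special cases of ``$4\mid q+1$'' and ``$4\mid p+q+1$''.

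Next I would feed this into the rationalized sequence of Theorem~\ref{th3}. The main obstacle, as I see it, is to check that none of the infinite contributions found above is annihilated by the homomorphisms of that sequence: this needs their explicit description, supplied by Theorem~\ref{th3}, together with their rational evaluation, which ultimately rests on Serre's rational homotopy of spheres and the known rational homotopy of Stiefel manifolds. Granting this, $E^m(S^p\times S^q)$ is infinite if and only if $4\mid p+q+1$, or $4\mid q+1$ and $m<p+3q/2+1$, or there is an $FCS$-point as above. It remains to see that, under our hypotheses, this is equivalent to the criterion in the statement, i.e.\ that ``$4\mid q+1$'' already suffices. If $4\mid q+1$ then $q$ is odd, so $p+3q/2+2$ is not an integer and the only integer $m$ in the range $2p+q+2<m<p+3q/2+2$ that fails $m<p+3q/2+1$ is $m=p+(3q+3)/2$. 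Writing $q=4k+3$, for this value one computes $m-p-q=2k+3$, $m-q=p+2k+3$ and $m-3=p+6k+3$, and the point $(x,y)=(2,1)$ satisfies $(2k+1)\cdot2+(p+2k+1)\cdot1=p+6k+3$; a direct check against Table~\ref{formal} shows $(2,1)\in FCS(2k+3,\,p+2k+3)$, via the entry ``$x=2$, $2\mid y+1$'' when $p$ is odd and the entry ``$x=2$, $4\mid y+3$'' when $p$ is even. Hence the $FCS$-condition already covers this corner value, and the criterion of the theorem follows.

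The ``only if'' direction is then immediate: if none of the listed conditions holds, both outer terms of the sequence of Theorem~\ref{th3} are finite, hence so is $E^m(S^p\times S^q)$. I expect the technical heart of the proof to be the rational evaluation of the homomorphisms in Theorem~\ref{th3} and the bookkeeping needed to combine the contributions of the knot, link, and framed-knot groups without cancellation or double counting.
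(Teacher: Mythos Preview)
Your plan matches the paper's proof in structure: use the exact sequence of Theorem~\ref{th3} rationally, decompose the link group as $E^m_{\mathrm U}(S^{p+q}\sqcup S^q)\oplus E^m(S^{p+q})\oplus E^m(S^q)$, invoke Theorems~\ref{th1}--\ref{th1b} and~\ref{th1f}, and handle the corner value $m=p+(3q+3)/2$ via the point $(2,1)\in FCS(m-p-q,m-q)$. Your corner-case computation and the ``only if'' direction are both correct and essentially identical to the paper's.

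The one real gap is the step you flag as the ``main obstacle'' and then grant without proof: that the infinite contributions are not killed in the sequence. The paper does not verify this contribution by contribution; it proves the cleaner statement that $\partial^*\colon E^m(D^p\times S^q)\to E^{m-1}_0(S^{p+q-1}\sqcup S^{q-1})$ has finite image throughout the range (Corollary~\ref{essentials}). The argument is short and worth knowing: by the first definition, $\partial^*$ factors through $\pi_{q-1}(S^{m-p-q-1})$ via the obstruction map $Ob$. The hypotheses force $m\le 2q$, so by Theorem~\ref{th1f} the source $E^m(D^p\times S^q)$ is finite unless $4\mid q+1$, while by Serre's theorem $\pi_{q-1}(S^{m-p-q-1})$ is finite unless $4\mid q$; since these are mutually exclusive, the composite $\partial^*$ is always rationally zero. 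Applying this at the adjacent stage as well shows the rationalized sequence splits into short exact sequences, yielding
\[
\rank E^m(S^p\times S^q)=\rank E^m(S^{p+q}\sqcup S^q)+\rank\pi_q(V_{m-q,p}),
\]
from which both directions follow immediately. So your outline is correct, but the ``technical heart'' you anticipate is actually a two-line parity trick rather than a substantial rational-homotopy computation.
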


\begin{example} \label{ex1} \cite[Example~1]{CRS08} The set of knotted tori $S^1\times S^5\to
S^{10}$ is finite up to isotopy.
\end{example}






In Theorem~\ref{th2} the inequality $m < p+{3q}/{2}+2$ is assumed by aesthetic reasons --- to reduce the number of cases and thus to simplify the statement and the proof. The 
classification of knotted tori for $m \ge p+{3q}/{2}+2$
is easier and is given by \cite[Corollary 1.5]{Sko02}, \cite[Theorem 1.2]{Sko08}, \cite[Lemma~1.12]{CFS11}.

A particular case $m>p+{4q}/{3}+2$ of Theorem~\ref{th2} was proved in \cite{CRS07, CRS08} by a different method.
To compare roughly the strength of all the mentioned results one could put $p=1$. Then
known results provide a classification for $m>3q/2+3$,
while Theorem~\ref{th2} provides a finiteness criterion for $m > q+4$.

\subsection*{Relationship between framed knots, links and knotted tori}

The main result of the paper is an exact sequence (Theorem~\ref{th3} below), which in some sense reduces the classification of knotted tori to the classification of links and framed knots.

Let us introduce some notation and conventions.
For a smooth manifold $N$ denote by $E^m (N)$ the set of smooth isotopy classes of smooth embeddings $N\to S^m$. The letter ``$E$'' in the notation comes from the word ``embedding''.
For $m> p+q+2$  the sets $E^m (S^q)$, $E^m (D^p\times S^q)$, and $E^m(S^{p+q}\sqcup S^q)$ are finitely generated Abelian groups with respect
to ``connected sum'', ``framed connected sum'', and ``componentwise connected sum'' operation, respectively \cite{Hae66A, Hae66C}.
Denote by $E_0^m(S^{p+q}\sqcup S^q)$ the subgroup of $E^m(S^{p+q}\sqcup S^q)$ formed by all the embeddings $S^{p+q}\sqcup S^q\to S^m$ whose second component (i.e., restriction to the sphere $S^q$) is unknotted.
For $m > 2p+q+2$ the ``parametric
connected sum'' operation gives a natural Abelian group structure on the set $E^{m}(S^p\times S^q)$; see Figure~\ref{fig5}, Section~\ref{sectprem}, and paper~\cite[\S2.1]{Sko15} for details.


\begin{figure}[htb]
\includegraphics{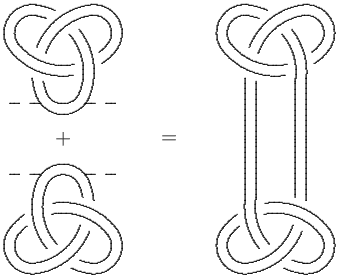}
\caption{Informal illustration of $S^1$-parametric connected sum of embeddings $S^1\times S^1\to S^3$ \cite[Figure~5]{CRS08}}
\label{fig5}
\end{figure}



\begin{theorem}\label{th3} For each 
$m > 2p+q+2$ 
there is an exact sequence of finitely generated Abelian groups
\begin{multline*}
{\dots}                                                   \xrightarrow{}
{{E}_0^{m}(S^{p+q}\sqcup S^{q})}                          \xrightarrow{\sigma^*}
{{E}^m(S^p\times S^q)}                                    \xrightarrow{i^*}
{{E}^m(D^p\times S^q)}                                    \xrightarrow{\partial^*}\\
                                                          \xrightarrow{}
{{E}_0^{m-1}(S^{p+q-1}\sqcup S^{q-1})}                    \xrightarrow{\sigma^*}
{{E}^{m-1}(S^{p}\times S^{q-1})}                          \xrightarrow{i^*}
{{E}^{m-1}(D^{p}\times S^{q-1})}                          \xrightarrow{}
{\dots}
\end{multline*}
\end{theorem}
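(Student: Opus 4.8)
The plan is to construct the exact sequence geometrically as a Mayer--Vietoris-type sequence associated to the standard genus-one splitting of $S^p \times S^q$ into two copies of $D^p \times S^q$ glued along $S^{p-1} \times S^q$, combined with a ``looping'' argument that produces the dimension shift. First I would set up the maps: $i^*$ is restriction of an embedding $S^p \times S^q \to S^m$ to one handle $D^p \times S^q$ (where $D^p \subset S^p$ is a hemisphere); $\sigma^*$ is the parametric-connected-sum action of a link $S^{p+q} \sqcup S^q$ (with unknotted $S^q$-component) on the standard embedding, using the $S^{p+q}$-component as the ``new'' piece glued in along a meridian; and the boundary map $\partial^*$ is obtained by restricting a framed knot $D^p \times S^q \to S^m$ to $\partial D^p \times S^q = S^{p-1} \times S^q$ and then ``taking one loop less'', i.e.\ passing to the fibered situation $S^{p-2}\times S^q$ sitting in $S^{m-1}$, which accounts for the decrease of all indices by one. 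The abelian group structures and finite generation are already available from \cite{Hae66A, Hae66C} together with the parametric connected sum of \cite{Sko08}, so the content is entirely in exactness at the three types of terms.

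The key steps, in order, are: (1) verify that each of $\sigma^*$, $i^*$, $\partial^*$ is a well-defined group homomorphism in the stated range $m > 2p+q+2$, which is exactly the range where parametric connected sum is defined and additive; (2) exactness at $E^m(S^p\times S^q)$: an embedding restricting to the unknotted framed knot on $D^p\times S^q$ can, by uniqueness of tubular neighborhoods and unknotting of $D^p\times S^q$ in this codimension, be isotoped to agree with the standard embedding on that handle, hence differs from standard only on the complementary handle, which is recorded precisely by a link in the complement — this identifies $\kernel i^*$ with $\image \sigma^*$; (3) exactness at $E^m(D^p\times S^q)$: a framed knot extends over the whole $S^p\times S^q$ iff the obstruction to extending across the second handle vanishes, and this obstruction is detected by the restriction to $S^{p-1}\times S^q$ viewed one dimension down, giving $\kernel \partial^* = \image i^*$; (4) exactness at $E_0^{m-1}(S^{p+q-1}\sqcup S^{q-1})$: a link of the required form bounds/extends to a framed knot $D^p\times S^q\to S^m$ iff its image under $\sigma^*$ in $E^{m-1}(S^p\times S^{q-1})$ is null, which is the same as saying it comes from $\partial^*$; (5) assemble these into the periodic sequence and check that the indices match up.

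I would expect the main obstacle to be step (3)–(4), namely the precise identification of the boundary homomorphism $\partial^*$ and the proof of exactness around it. The difficulty is that ``restrict a framed knot $D^p\times S^q\to S^m$ to its boundary and reinterpret it as a link in one lower dimension'' requires a clean geometric model: one must use that $S^{p-1}\times S^q$, as the boundary of a handle, has a canonical normal framing coming from the $D^p$-direction, then compress this picture (by general position, using $m > 2p+q+2$) into $S^{m-1}$, and identify the resulting link's first component as $S^{p+q-1}$ (the join-type sphere $S^{p-2} * S^q$, after absorbing one $S^1$-factor of the torus into the ambient sphere) — keeping track of which component stays unknotted. Establishing that this assignment is a homomorphism, that its kernel is exactly the framed knots extending over the torus, and that its image is exactly the links killed by $\sigma^*$, is where all the surgery-theoretic bookkeeping lives; once $\partial^*$ is correctly defined the exactness statements should follow from the same tubular-neighborhood and general-position arguments used in the knot and link cases of \cite{Hae66A, Hae66C, CFS11}.
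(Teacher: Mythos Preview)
Your descriptions of $i^*$ and $\sigma^*$ are essentially what the paper does, and you are right that the whole difficulty is concentrated in $\partial^*$. But your proposed construction of $\partial^*$ is not merely incomplete --- it is pointed in the wrong direction, and this is a genuine gap.

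First, look again at the sequence: after $\partial^*$ the torus term is $E^{m-1}(S^{p}\times S^{q-1})$, not $E^{m-1}(S^{p-1}\times S^{q})$. The index $p$ is \emph{constant} along the whole sequence; only $m$ and $q$ drop. So $\partial^*$ cannot come from the $S^p$-direction Mayer--Vietoris splitting $S^p\times S^q = D^p_+\times S^q \cup_{S^{p-1}\times S^q} D^p_-\times S^q$ that you base everything on. Concretely, restricting a framed knot to $\partial D^p\times S^q=S^{p-1}\times S^q$ produces an embedding of a $(p{+}q{-}1)$-manifold into $S^m$, and your proposed ``compress it into $S^{m-1}$ by general position using $m>2p+q+2$'' does not work: pushing an embedded $(p{+}q{-}1)$-manifold into a hyperplane by general position needs $m>2p+2q-1$, which is strictly stronger than the hypothesis once $q>3$. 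Even if it did work, you would land with an embedding of $S^{p-1}\times S^q$ in $S^{m-1}$, not a two-component link $S^{p+q-1}\sqcup S^{q-1}$; the join identity $S^{p-2}*S^q\cong S^{p+q-1}$ is numerologically correct but no step of your construction actually produces that join.

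The paper's $\partial^*$ runs orthogonally to your picture: the drop in dimension comes from the $S^q$-direction. Given $f\colon D^p\times S^q\to S^m$ one first \emph{extends} (not restricts) $f$ to an embedding $F\colon S^p\times S^q-\mathrm{Int}\,B\to S^m$ of the once-punctured torus; this extension exists precisely because $m>2p+q+2$ (a $p$-disc capping plus a framing obstruction in $\pi_{p-1}(V_{m-p,q})=0$). One then isotopes $F$ to a ``$B$-standardized'' form: equal to the standard embedding on $S^p\times D^q_-$, with $F(\partial B)\subset S^{m-1}$ and $F(\partial B)$ disjoint from $s(D^{p+1}\times S^{q-1})$. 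Then $\partial^* f$ is the link $F|_{\partial B}\sqcup s\colon S^{p+q-1}\sqcup S^{q-1}\to S^{m-1}$; the $S^{q-1}$ is the equator of $S^q$, and the $S^{p+q-1}$ is the boundary of the missing ball $B$, not any join of factor spheres. Equivalently --- and this is the cleanest way to see it is a homomorphism --- $\partial^*$ factors as
\[
E^m(D^p\times S^q)\ \xrightarrow{\ Ob\ }\ \pi_{q-1}(S^{m-p-q-1})\ \xrightarrow{\ Ze\ }\ E_0^{m-1}(S^{p+q-1}\sqcup S^{q-1}),
\]
where $Ob$ is the obstruction to a single unit normal field on the framed knot and $Ze$ is the Zeeman link with prescribed linking coefficient. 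Exactness at all three types of terms is then proved by direct surgery arguments using the $B$-standardization lemma; there is no looping or fibered desuspension in the argument.
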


For $m\ge 3(p+q)/2+2$
this sequence is isomorphic to the middle horizontal sequence in \cite[Restriction Lemma 5.2]{Sko08}, while for general $m$ our exact sequence can be called a ``desuspension'' of that one; see Remark~\ref{rem-desuspension} below for details.

As a nontrivial corollary, we get the following formula for the rank of the group $E^m(S^p\times S^q)$:

\begin{corollary} \label{essentials}
Assume that $m>2p+q+2$ and $m<p+{3}q/{2}+2$.
Then
$$
\rank E^m(S^{p}\times S^q) =
\rank E^m(S^{p+q}\sqcup S^q)+ \rank \pi_{q}(V_{m-q,p}).
$$
\end{corollary}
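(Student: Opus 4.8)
The goal is to prove Corollary~\ref{essentials}, the rank formula, from the exact sequence of Theorem~\ref{th3}. The plan is to extract a short exact sequence of rational vector spaces from the long exact sequence and analyze the connecting maps using the dimension hypothesis $m<p+3q/2+2$.

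First I would tensor the entire long exact sequence of Theorem~\ref{th3} with $\mathbb{Q}$; since all the groups are finitely generated abelian, this preserves exactness and replaces ranks by dimensions. The resulting sequence couples the groups $E^m_0(S^{p+q}\sqcup S^q)$, $E^m(S^p\times S^q)$, $E^m(D^p\times S^q)$ together with their lower-dimensional analogues indexed by $m-1, m-2, \dots$. The key point is to show that in our dimension range the sequence essentially breaks up, so that the portion
$$
0\longrightarrow E^m_0(S^{p+q}\sqcup S^q)\otimes\mathbb{Q}\xrightarrow{\ \sigma^*\ } E^m(S^p\times S^q)\otimes\mathbb{Q}\xrightarrow{\ i^*\ } E^m(D^p\times S^q)\otimes\mathbb{Q}\longrightarrow 0
$$
is exact, which immediately yields the claimed additivity of ranks once we identify $\rank E^m(D^p\times S^q)$ with $\rank\pi_q(V_{m-q,p})$.

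The two things to check are therefore: (a) the connecting homomorphism $\partial^*\colon E^m(D^p\times S^q)\to E^{m-1}_0(S^{p+q-1}\sqcup S^{q-1})$ is rationally zero (equivalently, $i^*$ is rationally surjective), and the incoming map $\partial^*\colon E^{m+1}(D^{p+1}\times S^q)\to E^m_0(S^{p+q}\sqcup S^q)$ is rationally zero (equivalently, $\sigma^*$ is rationally injective); and (b) the identification $\rank E^m(D^p\times S^q)=\rank\pi_q(V_{m-q,p})$. For (b), the classification of framed knots $D^p\times S^q\to S^m$ is reduced to a homotopy problem; as indicated in the abstract and surrounding text, after restricting to the core sphere the remaining data is a framing up to homotopy, governed by $\pi_q(V_{m-q,p})$, with the knotting of the core contributing only $E^m(S^q)$, whose rank is known from Theorem~\ref{th1} to vanish precisely when $m\ge 3q/2+2$; our hypothesis $m<p+3q/2+2$ combined with $m>2p+q+2$ forces $m>3q/2+2$ (indeed $p+3q/2+2>m>2p+q+2$ gives $q/2>p$, hence $3q/2+2<q+2p+2<m$... wait, let me just say: the two inequalities force $q>2p$, whence $m>2p+q+2$ is not directly $>3q/2+2$, so one must argue this more carefully) so $\rank E^m(S^q)=0$ and $\rank E^m(D^p\times S^q)=\rank\pi_q(V_{m-q,p})$.

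The main obstacle is part (a): controlling the connecting homomorphisms $\partial^*$ rationally. The strategy is to use the range $m<p+3q/2+2$ to show that the lower-dimensional groups entering via $\partial^*$ are either rationally trivial or that $\partial^*$ factors through a rationally trivial group. Concretely, one tracks down the domains $E^{m+1}(D^{p+1}\times S^q)$ and the targets $E^{m-1}_0(S^{p+q-1}\sqcup S^{q-1})$: by the metastable classification of knotted tori \cite{Sko02,Sko08} (valid since $m+1\ge p+1+3q/2+2$ may or may not hold — if it does not, one iterates the exact sequence) or by direct rational computation of the relevant homotopy groups of Stiefel manifolds and spheres, these groups contribute nothing rationally beyond what $\sigma^*$ and $i^*$ already account for. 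I expect the cleanest route is to invoke the rational homotopy-theoretic computations underlying Theorems~\ref{th1f} and~\ref{th1b} together with the well-known fact that $\pi_*(V_{n,k})\otimes\mathbb{Q}$ and $\pi_*(S^n)\otimes\mathbb{Q}$ are concentrated in a controlled range, and to check degree by degree that the alternating-sum (Euler-characteristic) identity forced by exactness of the whole long sequence collapses to the stated two-term formula once the dimension restrictions are imposed.
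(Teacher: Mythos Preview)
Your overall strategy — tensor the long exact sequence of Theorem~\ref{th3} with $\mathbb{Q}$, show that $\partial^*\otimes\mathbb{Q}=0$, and read off a short exact sequence — is exactly what the paper does. However, there are two genuine gaps.

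\textbf{The identification of $\rank E^m(D^p\times S^q)$.} You claim $\rank E^m(D^p\times S^q)=\rank\pi_q(V_{m-q,p})$, and try to justify it by showing $\rank E^m(S^q)=0$ via $m>3q/2+2$. You yourself noticed this does not follow from the hypotheses, and indeed it is false: for instance, when $p=0$ the assumptions give only $q+2<m<3q/2+2$, so $E^m(S^q)$ can be infinite. The paper does \emph{not} argue that $E^m(S^q)$ is finite. Instead it uses the two rational splittings stated just before the proof,
\[
E^m(D^p\times S^q)\otimes\mathbb{Q}\cong E^m(S^q)\otimes\mathbb{Q}\oplus\pi_q(V_{m-q,p})\otimes\mathbb{Q},
\qquad
E^m(S^{p+q}\sqcup S^q)\cong E^m_0(S^{p+q}\sqcup S^q)\oplus E^m(S^q),
\]
so that the short exact sequence yields
\[
\rank E^m(S^p\times S^q)=\rank E^m_0(S^{p+q}\sqcup S^q)+\rank E^m(S^q)+\rank\pi_q(V_{m-q,p})
=\rank E^m(S^{p+q}\sqcup S^q)+\rank\pi_q(V_{m-q,p}).
\]
The summand $E^m(S^q)$ is absorbed into $E^m(S^{p+q}\sqcup S^q)$, not killed.

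\textbf{Vanishing of $\partial^*\otimes\mathbb{Q}$.} Your discussion here is a sketch of intentions rather than an argument, and it misses the concrete mechanism. The paper's proof is short and specific: by the \emph{first} definition, $\partial^*$ factors as $E^m(D^p\times S^q)\xrightarrow{Ob}\pi_{q-1}(S^{m-p-q-1})\xrightarrow{Ze}E^{m-1}_0(S^{p+q-1}\sqcup S^{q-1})$, so it suffices that either the source or the intermediate group is finite. The hypotheses force $m\le 2q$; then by Theorem~\ref{th1f} the source is finite unless $4\mid q+1$, while by Serre the intermediate group $\pi_{q-1}(S^{m-p-q-1})$ is finite unless $4\mid q$. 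These conditions are mutually exclusive, so $\partial^*$ has finite image (the case $p=0$ is handled separately by surjectivity of $i^*$). Applying this at the adjacent level $(m+1,q+1)$ — note the incoming map is $E^{m+1}(D^{p}\times S^{q+1})\to E^m_0(S^{p+q}\sqcup S^q)$, not $E^{m+1}(D^{p+1}\times S^q)$ as you wrote — gives the rational injectivity of $\sigma^*$.
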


Notice that the ranks of the groups in the right-hand side are known \cite[Theorem~1.7 and Lemma~1.12]{CFS11}.

\subsection*{Organization of the paper}
In Section~\ref{sectprem} we introduce some notation and recall some required known results. In Section~\ref{sect2} we prove Theorem~\ref{th3}. In Section~\ref{sect3} we deduce Theorem~\ref{th2} from Theorem~\ref{th3} and give an easy application (Theorem~4.1) of our approach.

The reader who wants to get a nontrivial result in a minimal time may read only subsections ``Group Structure'', ``Definition of $\sigma^*$'', ``Exactness at $E^m(S^p\times S^q)$'', 
and then immediately get the ``only if'' part of Theorem~\ref{th2} from Theorems~\ref{th1}--\ref{th1b}.

Most of the ideas of the paper can be understood from the low-dimensional examples shown in figures. Notice that the proofs may not be literally correct for the shown low dimensions. In the figures instead of the spheres $S^1,S^2,S^3$ we always show their images under 
an appropriate stereographic projection.



\section{Preliminaries} \label{sectprem}


\subsection*{Group structure}

First we define of a group structure
on the set of knotted tori by A. Skopenkov~\cite[\S2.1]{Sko15}.

Let $x_1,x_2,\dots,x_{m+1}$ be the coordinates in space $\mathbb{R}^{m+1}$. For each $q\le m$ identify space $\mathbb{R}^{q+1}$ with the subspace of $\mathbb{R}^{m+1}$ given by the equations $x_{q+2}=x_{q+3}=\dots=x_{m+1}=0$. 
Denote by $r_{k}\colon \mathbb{R}^{m+1}\to \mathbb{R}^{m+1}$
the reflection in the hyperplane given by the equation $x_k=0$.
Denote by $D^q_+$ and $D^q_-$ the half-spheres of the unit sphere $S^q$ 
given by the inequalities $x_{q+1}\ge 0$ and $x_{q+1}\le 0$, respectively.
Then $\partial D^q_+=\partial D^q_-=S^{q-1}$.
Denote by $D^q_{+\epsilon}$ (respectively, $D^{q}_{++}$) the subset of $S^q$ 
given by the inequality $x_{q+1}\ge \epsilon$ (respectively, the inequalities $x_{q+1},x_q\ge 0$).
Denote by $D^q_{1\pm\epsilon}=(1\pm\epsilon)D^q$ the scaling of the unit disc $D^q$.
Let $\alpha_\epsilon\colon S^{q-1}\to \partial D^q_{+\epsilon}$ be the central projection  from the point $(0,\dots,0,(1+\sqrt{1-\epsilon^2})/\epsilon)$.
Fix an embedding $\beta\colon D^{p+q}_-\to\mathrm{Int}(D^p_+\times D^q_+)$ and denote $B=\beta(D^{p+q}_-)$.

If no confusion arises we denote a map and its abbreviation by the same symbol, and also an embedding and its isotopy class by the same symbol.

For $q\le m$ {\it the standard embedding} $s\colon S^q\to {S}^m$ is given by the formula $s(x_1,\dots,x_{q+1})=(0,\dots,0,x_1,\dots,x_{q+1})$, where the number of zero coordinates equals $m-q$. It does \emph{not} coincide with the inclusion $S^q\to S^m$,
$(x_1,\dots,x_{q+1})\mapsto (x_1,\dots,x_{q+1},0,\dots,0)$, coming from the identification of space $\mathbb{R}^{q+1}$ with a subspace of $\mathbb{R}^{m+1}$.

For $p+q\le m$ {\it the standard embeddings} $s\colon \mathbb{R}^p\times S^q\to {S}^{m},D^p\times S^q\to {S}^{m}, S^{p-1}\times S^q,S^{p-1}\times D^{q}_{\pm}\to {D}^{m}_{\pm}$
are given by one formula
$$
s\left((x_1,\dots, x_{p}),(y_1,\dots,y_{q+1})\right)= \frac{(\overbrace{0,\dots,0}^{m-p-q},x_1,\dots, x_{p},y_1,\dots,y_{q+1})}{\sqrt{x_1^2+\dots+x_{p}^2+y_1^2+\dots+y_{q+1}^2}}.
$$
Clearly, $s(D^p\times S^q)$ is the $\frac{\pi}{4}$-neighborhood of the sphere $s(S^q)$ in the sphere $s(S^{p+q})$.



\begin{definition*} (See Figure~\ref{surgery} to the left.)
An embedding $f\colon S^p\times S^q\to S^m$ is {\it standardized}, if 
\begin{itemize}
\item
$f\colon S^p\times D^q_-\to D^m_-$ is the standard embedding;
\item
$f(S^p\times\interior D^q_+)\subset\interior D^m_+$.
\end{itemize}
An isotopy $f_t\colon S^p\times S^q\to S^m$ is {\it standardized}, if for each $t\in I$ the embedding $f_t$ is standardized.
%
\end{definition*}

\begin{lemma}\label{l1} \textup{\cite[Standardization Lemma 2.1]{Sko15}}
Assume that $m>2p+q+2$.
Then

\textup{(a)} any embedding $S^p\times S^q\to S^m$ is isotopic to a standardized embedding; and

\textup{(b)} if two standardized embeddings $S^p\times S^q\to S^m$ are isotopic then there is a standardized isotopy between them.
\end{lemma}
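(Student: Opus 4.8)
\textbf{Proof proposal for Lemma~\ref{l1}.}

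The plan is to prove both parts by a general-position argument combined with engulfing/uniqueness of tubular neighborhoods, following the standard scheme for standardizing embeddings. First I would recall that the standard embedding $s$ sends $S^p\times D^q_-$ into the lower hemisphere $D^m_-$, and that what we want is to isotope an arbitrary embedding $f$ so that it \emph{agrees} with $s$ on $S^p\times D^q_-$ while staying in the correct hemisphere on the complement. The key dimension inequality $m>2p+q+2$ is exactly what makes the relevant general-position arguments work: the double point set of a generic map of a $(p+q)$-manifold into $S^m$ has dimension $2(p+q)-m<q-2$, and more relevantly the restriction $f|_{S^p\times\{y\}}$ for $y$ near the south pole can be controlled because $2p+\text{(something)}<m$.

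For part~(a), the concrete steps would be: (i) by a small isotopy make $f$ smooth and transverse to the equator $S^{m-1}$, and arrange $f^{-1}(S^{m-1})$ to be a submanifold; (ii) using the codimension assumption, isotope $f$ so that $f^{-1}(D^m_-)$ is precisely $S^p\times D^q_-$ (this uses that $S^p\times D^q_-$ deformation retracts to $S^p\times\{\text{pt}\}$ and a general-position/engulfing argument on the $p+q+1$-manifold side, pushing the preimage of the lower hemisphere into the prescribed collar — here $m>2p+q+2$ guarantees that the relevant obstruction/ambient isotopy exists, since embeddings of $S^p\times D^q$ into $D^m_-$ are unique up to isotopy in this range by the Haefliger-type unknotting, i.e. $2m>3(p+q+1)+?$ must be checked, but more directly one cites \cite[Standardization Lemma 2.1]{Sko08} or reproves it); (iii) now $f|_{S^p\times D^q_-}$ is an embedding into $D^m_-$ which is isotopic to the standard one by uniqueness of embeddings in this metastable-type range, so extend that isotopy of the submanifold to an ambient isotopy of $S^m$ supported near $D^m_-$, and finally adjust by uniqueness of tubular neighborhoods so that $f$ literally equals $s$ on $S^p\times D^q_-$. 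The resulting embedding is standardized.

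For part~(b), given a standardized isotopy's worth of data — namely an isotopy $F\colon S^p\times S^q\times I\to S^m$ between two standardized embeddings — I would run the same argument one dimension up, viewing $F$ as an embedding of $(S^p\times S^q)\times I$ (a manifold with boundary) and applying relative versions of the general-position and engulfing lemmas, using the hypothesis $m>2p+q+2$ (which already accounts for the extra $I$-parameter in the way the inequality is stated, since $\dim$ of the double-point set of the track is $2(p+q+1)-m<q-1$). One isotopes $F$ rel ends so that $F^{-1}(D^m_-)=S^p\times D^q_-\times I$ and then so that $F$ restricted there is the product of $s$ with $\identity_I$; the relative uniqueness of tubular neighborhoods and of embeddings in this range, applied parametrically over $I$, gives the conclusion.

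The main obstacle I anticipate is step (ii)/(iii) of part (a): controlling the preimage $f^{-1}(D^m_-)$ and then trivializing $f$ on it requires a metastable unknotting statement for embeddings $S^p\times D^q\to D^m$ (equivalently a statement that any two such embeddings extending a fixed embedding on the boundary are isotopic), and verifying the precise range in which this holds — one wants something like $2m\ge 3(p+q)+\text{const}$ or the weaker connectivity-based bound, and reconciling it with the stated hypothesis $m>2p+q+2$ is the delicate point. Since this is precisely the content of \cite[Standardization Lemma 2.1]{Sko08}, the cleanest route is to cite that result; the excerpt indeed attributes the lemma to that source, so the ``proof'' here amounts to invoking it, and I would present the argument above only as the underlying idea.
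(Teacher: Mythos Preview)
Your proposal is essentially correct and, in fact, more detailed than what the paper does: the paper gives no proof at all, treating the lemma as a direct citation of \cite[Standardization Lemma~2.1]{Sko08}. Your sketch of the engulfing/general-position/uniqueness-of-tubular-neighborhoods argument is the right underlying picture, and your final conclusion --- that one should simply cite the source --- is exactly what the paper does.

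The one point the paper adds that you do not mention is the sentence ``This lemma is equivalent to \cite[Lemma~2.1]{Sko08} by the `concordance implies isotopy' theorem~\cite{Hud69}.'' In other words, the statement in \cite{Sko08} is apparently phrased in terms of \emph{concordance} rather than isotopy (in part~(b) one gets a standardized concordance between standardized embeddings), and the translation to the isotopy version stated here uses Hudson's concordance-implies-isotopy theorem in codimension $\ge 3$. Your part~(b) argument, which tries to run the standardization parametrically over $I$ directly as an isotopy of isotopies, would in practice first produce a concordance and then need this extra step. So if you want your write-up to match the paper's logic, replace the ``one dimension up'' paragraph with: apply \cite[Lemma~2.1]{Sko08} to get a standardized concordance, then invoke \cite{Hud69} to upgrade it to a standardized isotopy rel ends.
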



\begin{lemma}\label{l3} \textup{\cite[Group Structure Lemma 2.2]{Sko15}}
Assume that $m> 2p+q+2$. Then an Abelian group structure on the set $E^m(S^p\times S^q)$ is well-defined by the following construction.
\begin{itemize}
\item Let $f,g\colon S^p\times S^q\to S^m$ be two embeddings.
Take standardized embeddings $f',g'$ isotopic to them.
By definition, set $f+g$ to be the isotopy class of the embdedding
$$h(x,y)=
\begin{cases} f'(x,y),& y\in D^q_+;\\ r_mr_{m+1} g'(x,r_q r_{q+1}y),& y\in D^q_-.\end{cases}$$
\item Set $-f$ to be the isotopy class of the embedding $\bar f(x,y)=r_m f(x,r_q y)$.
\item Set $0$ to be the isotopy class of the standard embedding $s\colon S^p\times S^q\to S^m$.
\end{itemize}
\end{lemma}


\begin{lemma}\label{l2} \textup{\cite[Lemma~3.1]{Sko15}} 
Assume that $m>2p+q+2$.
Then
an embedding
$S^p\times S^q\to S^m$ is isotopic to the standard embedding if and only if it
extends to an embedding $S^p\times D^{q+1}\to D^{m+1}$.
\end{lemma}


These lemmas are the only results of~\cite{Sko15} used in the present paper.

The next two subsections give some insight for the proof of Theorem~\ref{th3} although they are not used formally in that proof.

\subsection*{Action of knots}
Let us define an action of the group of knots on the set of embeddings and prove a particular case of Theorem~\ref{th2} (see the paragraph after Lemma~\ref{cl1}).

Define a map
$\sigma^*\colon E^{m}(S^{p+q})\to E^{m}(S^p\times S^q)$ as follows. Represent an element of $E^m(S^{p+q})$ by an embedding $g\colon S^{p+q}\to S^m$ such that the images $g(S^{p+q})$ and $s(S^p\times S^q)$ are separated by a hyperplane. Join these images by an arc whose interior misses the images. Let $\sigma^*(g)\colon S^p\times S^q\to S^m$ be the embedded connected sum $g\#s$ of the knot $g$ and the standard embedding
$s\colon S^p\times S^q\to S^m$ along this arc. For $p=0$ or $q=0$ the manifold $S^p\times S^q$ is disconnected, thus we need to specify that the endpoints of the arc belong to $g(D^{p+q}_+)$ and $s(D^p_+\times D^q_+)$. Clearly, for $m>p+q+2$ the map
$\sigma^*\colon E^{m}(S^{p+q})\to E^{m}(S^p\times S^q)$ is well-defined by this construction; cf. Lemma~\ref{clwelldef} below.

Description of a similar action $E^m(S^n)\times E^m(N)\to E^m(N)$ for a general $n$-manifold $N$ is a hard open problem \cite{CrSk08, Sko08Z}. Fortunately, in our situation enough information can be obtained:

\begin{lemma}\label{cl1} \textup{\cite[Proposition~8]{CRS08}} For $m> 2p+q+2$ the
map $\sigma^*\colon E^{m}(S^{p+q})\to E^{m}(S^p\times S^q)$ is injective.
\end{lemma}

This lemma immediately implies the case ``$p+q+1$ divisible by 4'' of Theorem~\ref{th2}, by Theorem~\ref{th1} above. In fact $E^m(S^{p+q})$ is a direct summand in $E^m(S^p\times S^q)$ \cite[Theorem~1.1]{Sko15}.

Let us define \emph{the standard surgery} used in the proof of Lemma~\ref{cl1}. 
Informally, it is a surgery over the ``torus'' $S^p\times S^q$ along a ``meridian'' $S^p\times y$, $y\in S^q$; see Figure \ref{surgery}.

\begin{figure}[htb]
\begin{tabular}[t]{c}
\\
\includegraphics[width=8cm]{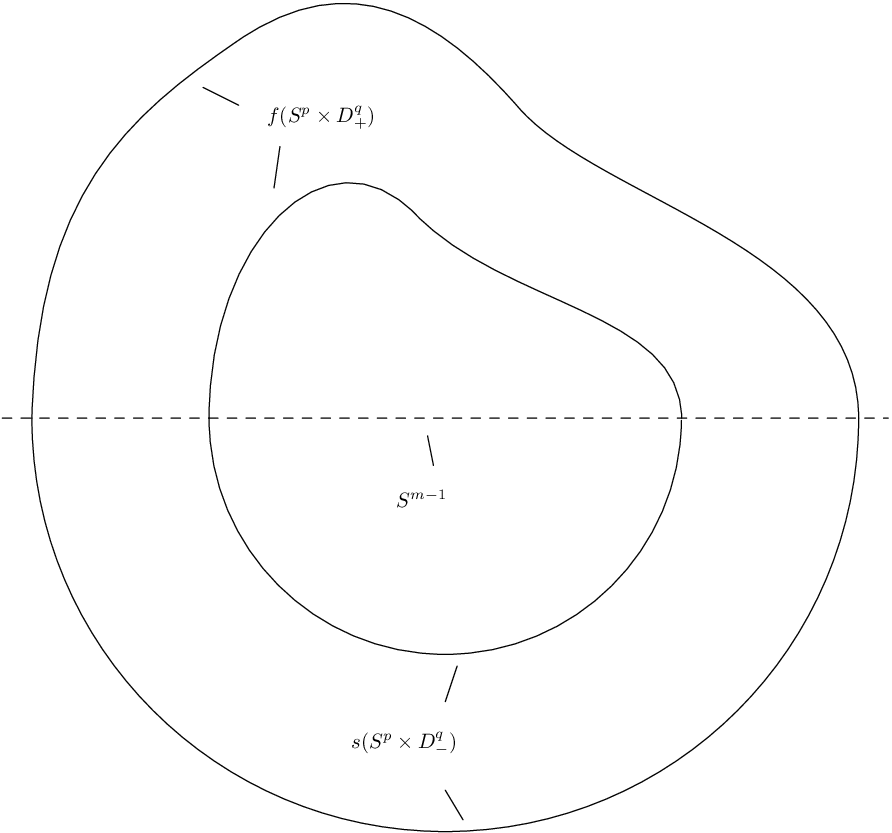}
\end{tabular}
\begin{tabular}[t]{c}
\\[3cm]
$\mapsto$
\end{tabular}
\begin{tabular}[t]{c}
\\
\includegraphics[width=8cm]{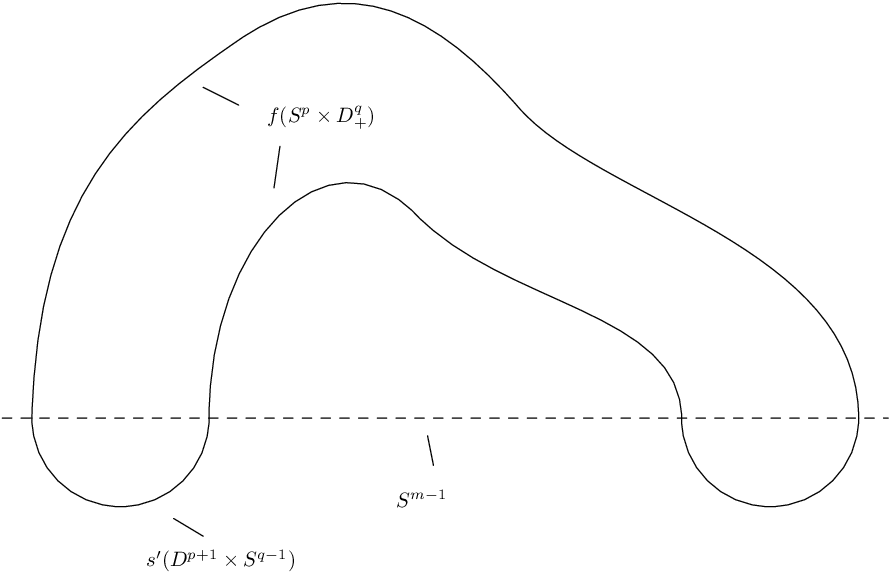}
\end{tabular}
\caption{The standard surgery ($p=0, q=1, m=2$)}\label{surgery}
\end{figure}

To give a formal definition, fix a diffeomorphism $S^{p+q}\cong S^p\times D^q_+\cup D^{p+1}_-\times S^{q-1}$.
Let the embedding $s'\colon D^{p+1}_-\times S^{q-1}\to D^m_-$ be given by the formula
$$s'((x_1,\dots,x_{p+2}),(y_1,\dots,y_q))=
\frac{1}{\sqrt{2}}(\overbrace{0,\dots,0}^{m-p-q-1},x_1,\dots,x_{p+1},y_1,\dots,y_q,x_{p+2}).
$$
Define the result of the {\it standard surgery} over the standard embedding $s\colon S^p\times S^q\to S^m$ to be the $C^1$-smooth unknotted embedding $S^{p+q}\to S^m$ obtained by gluing $s\colon S^{p}\times D^{q}_+\to D^m_+$ and $s'\colon D^{p+1}_-\times S^{q-1}\to D^m_-$ together.
Define the result of the {\it standard surgery} over a standardized embedding $f\colon S^p\times S^q\to S^m$ to be the embedding $g\colon S^{p+q}\to S^m$ obtained by gluing $f\colon S^{p}\times D^{q}_+\to D^m_+$
and $s'\colon D^{p+1}_-\times S^{q-1}\to D^m_-$ together:
$$
g(x)=\begin{cases}
f(x), & x\in S^{p}\times D^{q}_+;\\
s'(x), & x\in D^{p+1}_-\times S^{q-1}.
\end{cases}
$$


\begin{proof}[Proof of Lemma~\ref{cl1}]
It suffices to construct a left inverse $\bar\sigma^*\colon E^{m}(S^p\times S^q)\to E^{m}(S^{p+q})$ of~$\sigma^*$.
Take an element of $E^{m}(S^p\times S^q)$. By Standardization Lemma~\ref{l1}.a it can be realized by a standardized embedding $f\colon S^p\times S^q\to S^m$.
Set $\bar\sigma^*(f)\colon S^{p+q}\to S^m$ to be the result of the standard surgery over $f$.

Let us prove that the isotopy class of $\bar\sigma^*(f)$ is well-defined, i.e., does not depend on the choice of 
$f$ within an isotopy class. Take two isotopic standardized embeddings $f_1,f_2\colon S^{p}\times S^q\to S^m$. By Standardization Lemma~\ref{l1}.b there is a standardized isotopy $f_t$ between $f_1$ and $f_2$. Then $\bar\sigma^*(f_t)$ is an isotopy between $\bar\sigma^*(f_1)$ and $\bar\sigma^*(f_2)$. That is, the isotopy classes of $\bar\sigma^*(f_1)$ and $\bar\sigma^*(f_2)$ are the same.

Let us prove that $\bar\sigma^*\sigma^*=\mathrm{Id}$. Take an element of $E^m(S^{p+q})$.
Represent it by an embedding $g\colon S^{p+q}\to S^m$ such that $g(S^{p+q})$ and $s(S^p\times S^q)$ are separated by a hyperplane.
Then $\bar\sigma^*\sigma^*(g)=\bar\sigma^*(s\# g)=\bar\sigma^*(s)\#g=0\#g=g$.
\end{proof}

\subsection*{Framed knots}

Let us recall an approach to the classification of (partially)
framed knots.

By a {\it $p$-framing} of an embedded manifold we mean a system of $p$ ordered orthogonal normal unit vector fields on the manifold.
Denote by $V_{m-q,p}$ the {\it Stiefel manifold} of $p$-framings of the
origin of $\mathbb{R}^{m-q}$.
Clearly, the group $E^m(D^p\times S^q)$ is isomorphic to the group of $p$-framed embeddings $S^q\to S^m$ up to $p$-framed isotopy.
The group structure on the set $E^m(D^p\times S^q)$ is constructed literally as on $E^m(S^p\times S^q)$ above (with $S^p$ replaced by $D^p$ in Lemmas~\ref{l1}--\ref{l3}) or equivalently as on $(m-q)$-framed embeddings $S^q\to S^m$ up to $(m-q)$-framed isotopy 
in \cite{Hae66A}.

The following result in some sense reduces the classification of framed knots to the classification of knots and  computations of homotopy groups.

\begin{theorem}\label{th4} \textup{\cite[Theorem~9,4)]{CRS08}} For $m>q+2$ 
there is an exact sequence
$$
\dots \to \pi_{q}(V_{m-q,p}) \xrightarrow{} {E}^m(D^p\times S^q) \xrightarrow{} E^{m}(S^q)
\xrightarrow{} \pi_{q-1}(V_{m-q,p})\to {E}^{m-1}(D^p\times S^{q-1})\to\dots
$$
\end{theorem}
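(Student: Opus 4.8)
The plan is to reformulate the statement in terms of normal framings and then check exactness term by term. By the isotopy extension theorem, $E^m(D^p\times S^q)$ is naturally isomorphic to the group of pairs $(g,\xi)$ — where $g\colon S^q\to S^m$ is an embedding and $\xi$ is a $p$-framing of the normal bundle $\nu_g$ — taken up to framed isotopy, with connected sum near a point as the operation; under this identification $i^*$ is the map $(g,\xi)\mapsto g$. I define the ``obstruction'' map as follows: after an isotopy, standardize $g$ on a hemisphere $D^q_-$ (so that it carries there the standard $p$-framing), trivialize $\nu_g$ over the complementary disc $D^q_+$ and pick a $p$-framing $\xi_+$ there; along the equator $S^{q-1}$ the framings $\xi_+$ and the standard one differ by a map $S^{q-1}\to V_{m-q,p}$, and I let $Ob(g)$ be its class in $\pi_{q-1}(V_{m-q,p})$. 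A short check shows it is independent of all choices and depends only on the isotopy class of $g$ — in fact $Ob(g)=\rho_*[\nu_g]$, where $[\nu_g]\in\pi_{q-1}(SO(m-q))$ is the clutching class of $\nu_g$ relative to the standard trivialisation over $D^q_-$ and $\rho\colon SO(m-q)\to V_{m-q,p}$ is the bundle projection; since normal bundles add under connected sum, $Ob$ is a homomorphism. Finally, because $m-q=(m-1)-(q-1)$, the arrow $\pi_{q-1}(V_{m-q,p})\to E^{m-1}(D^p\times S^{q-1})$ is taken to be the map ``$\tau$'' for the dimensions $(q-1,m-1)$, where $\tau$ sends a framing class to the standard knot equipped with the standard framing twisted by it near a point. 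Thus the sequence is built by splicing the three basic maps $\tau$, $i^*$, $Ob$ over all pairs of dimensions of a fixed codimension; one checks directly that $\tau$ and $i^*$ are homomorphisms and that $i^*\circ\tau=0$ and $Ob\circ i^*=0$, so it remains to prove exactness at $E^m(D^p\times S^q)$, at $E^m(S^q)$ and at $\pi_{q-1}(V_{m-q,p})$.

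The first two are elementary once one may work with representatives that are standard near a point — the framed analogue of Lemma~\ref{l1}, a standard use of general position and of ``concordance implies isotopy'' \cite{Hud69}. A framed knot $(g,\xi)$ lies in $\kernel i^*$ iff $g$ is isotopic to the standard knot, i.e.\ iff, after standardization, $(g,\xi)$ is the standard knot equipped with some $p$-framing of the trivial normal bundle that is standard near a point; such framings are classified by $\pi_q(V_{m-q,p})$ with the standard one as base point, and $\tau$ realizes this bijection, so $\kernel i^*=\image\tau$. For exactness at $E^m(S^q)$: if $Ob(g)=0$ then, after standardizing $g$ on $D^q_-$, the map $S^{q-1}\to V_{m-q,p}$ measuring the framing discrepancy along the equator is null-homotopic, so $\xi_+$ can be modified near $S^{q-1}$ to agree there with the standard framing on $D^q_-$ and then glued, which writes $g=i^*(g,\xi)$; conversely $g\in\image i^*$ visibly has $Ob(g)=0$, so $\kernel Ob=\image i^*$.

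For exactness at $\pi_{q-1}(V_{m-q,p})$ I would use the framed analogue of the Triviality Criterion of Claim~\ref{l2}: \emph{a $p$-framed knot is framed-isotopic to the standard one if and only if it bounds a $p$-framed embedded disc in the ball}. Given $g$ standardized on $D^q_-$, the restriction $g|_{D^q_+}$ together with the chosen framing $\xi_+$ is such a framed disc in $D^m_+$, bounding the standard $(q-1)$-knot with framing $Ob(g)$; hence $\tau\circ Ob(g)=0$. Conversely, if $\tau(\beta)=0$ then the standard $(q-1)$-knot with framing $\beta$ bounds a $p$-framed embedded disc $D^q_+\to D^m_+$ meeting the boundary regularly; gluing this disc to the standard $D^q_-$-part produces an embedding $g\colon S^q\to S^m$ standardized on $D^q_-$ whose framing discrepancy along the equator is exactly $\beta$, so $\beta=Ob(g)\in\image Ob$. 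Thus $\kernel\tau=\image Ob$ and the sequence is exact. I expect the main obstacle to be establishing this framed triviality criterion for general $m$ rather than only in the metastable range: as for the Triviality Criterion in \cite{Sko08} it should follow from general position together with \cite{Hud69}, but some care is required to carry the framing through the argument; alternatively one can extract the whole sequence from Haefliger's exact sequences for knots \cite{Hae66A,Hae66C}, of which it is the partially framed counterpart.
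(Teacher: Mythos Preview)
Your proposal is correct and follows essentially the same route as the paper's sketch: reformulate $E^m(D^p\times S^q)$ as $p$-framed knots, define $i^*$, $Ob$, $\tau$ just as you do, and verify exactness termwise, with the only substantive step being exactness at the Stiefel term via a ``framed triviality criterion'' (the paper invokes this as ``cf.~Claim~\ref{l2}'', i.e.\ the $D^p\times S^q$-analogue of the $S^p\times S^q$ criterion). Your identification of that criterion as the one nontrivial ingredient, and your remark that it follows from concordance-implies-isotopy in codimension $\ge 3$, match the paper's treatment; the paper is actually terser than you are at the other two terms (it just says they are ``checked directly'').
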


The theorem is proved analogously to
its particular case $m=p+q$
\cite[Corollary~5.9]{Hae66A}. We sketch the proof here for convenience of the reader.

\begin{proof}[Sketch of the proof]
{\it Definition of homomorphisms.} The map $i^*\colon {E}^m(D^p\times S^q) \to E^{m}(S^q)$ is the composition ${E}^m(D^p\times S^q) \to {E}^m(0\times S^q) \cong E^{m}(S^q)$
of the restriction-induced map and the map induced by the standard embedding $s\colon S^q\to 0\times S^q$.

The map $Ob\colon E^{m}(S^q)\to \pi_{q-1}(V_{m-q,p})$ is the  obstruction to
the existence of a $p$-framing on an embedding $f\colon S^q\to S^m$
defined as follows. Take a (unique up to homotopy)
$(m-q)$-framing
of the disc $fD^q_+$. Take a (unique up to homotopy) $p$-framing of the disc $fD^q_-$. Thus the sphere $fS^{q-1}$ is equipped both with the $p$-framing and the $(m-q)$-framing. Using the $(m-q)$-framing identify each fiber of the normal bundle to $fD^q_+$ with the space $\mathbb{R}^{m-q}$. To each point $x\in S^{q-1}$ assign the $p$-framing at the point $fx$. This leads to a map $S^{q-1}\to V_{m-q,p}$. By definition $Ob(f)\in \pi_{q-1}(V_{m-q,p})$ is the homotopy class of this map.

The map $\tau\colon \pi_{q}(V_{m-q,p}) \to {E}^m(D^p\times S^q)$ is defined as follows.
Represent $f\in \pi_{q}(V_{m-q,p})$ as a smooth map $f\colon D^p\times S^q\to D^{m-q}$ linear in each fiber $D^p\times y$, $y\in S^q$.
Define $\tau(f)$ to be the composition $D^p\times S^q\to D^{m-q}\times S^q\to S^m$ of the embedding $f\times \mathrm{projection}$ and the standard
embedding $s\colon D^{m-q}\times S^q\to S^m$, i.e., $\tau(f)(x,y)=s(f(x,y),y)$ for each $x\in D^p$, $y\in S^q$.

{\it The exactness at the terms ${E}^m(D^p\times S^q)$ and $E^{m}(S^q)$} is checked directly.

{\it Proof of the exactness at the term $\pi_{q}(V_{m-q,p})$.} Let $f\colon S^{q+1}\to S^{m+1}$ be an embedding. Then $f$ is isotopic to a {\it standardized} embedding $f'\colon S^{q+1}\to S^{m+1}$, i.e., satisfying the conditions:
\begin{itemize}
\item $f'\colon D^{q+1}_-\to D^{m+1}_-$ is the abbreviation of the standard embedding $S^{q+1}\to S^{m+1}$;
\item $f'(\mathrm{Int}D^{q+1}_+)\subset \mathrm{Int}D^{m+1}_+$.
\end{itemize}
Take a $p$-framing of the disc $f'(D^{q+1}_+)$.
Represent this framing by an embedding $g\colon D^p\times D^{q+1}_+\to D^{m+1}_+$ linear in each fiber $D^p\times y$, $y\in D^{q+1}_+$. Clearly, $\tau Ob (f')=g\left|_{D^p\times \partial D^{q+1}_+}\right.$.
Thus the embedding $\tau Ob(f')\colon D^p\times \partial D^{q+1}_+\to \partial D^{m+1}_+$ extends to the embedding $g\colon D^p\times D^{q+1}_+\to D^{m+1}_+$.
So $\tau Ob (f')$ is isotopic to the standard embedding $D^p\times S^q\to S^m$, cf.~Lemma~\ref{l2} above. Thus $\image Ob\subset \kernel \tau$. Analogously $\image Ob\supset \kernel\tau$.
\end{proof}

\section{The exact sequence} \label{sect2}


First let us define the homomorphisms in Theorem~\ref{th3}. These maps are well-defined for  $m>p+q+2$ and are homomorphisms for $m>2p+q+2$.

Throughout this section we replace the group $E^m(D^p\times S^q)$ in Theorem~\ref{th3} by the group $E^m(D^p_-\times S^q)$. The former and the latter groups are identified, say, by the isomorphism induced by the central projection $D^p_-\to D^p$ from the point $(0,\dots,0,1)\in\mathbb{R}^{p+1}$.

The map $i^*\colon {{E}^m(S^p\times S^q)}\to {{E}^m(D^p_-\times S^q)}$ is restriction-induced. It follows directly by the construction of the group structure that the map is a homomorphism for $m>2p+q+2$.

\subsection*{Definition of $\sigma^*$.}
\addcontentsline{subsection}{Definition of sigma*}{Definition of sigma*}
The map $\sigma^*\colon{{E}_0^{m}(S^{p+q}\sqcup S^{q})}\to{{E}^{m}(S^{p}\times S^{q})}$ is defined as follows; see Figure~\ref{fig6}.
Represent an element of ${{E}_0^{m}(S^{p+q}\sqcup S^{q})}$ by an embedding $f\colon S^{p+q}\sqcup S^q\to S^m$ such that the restriction  $f\colon S^q\to S^m$ is standard and $f(S^{p+q})\cap s(D^{p+1}\times S^q)=\emptyset$. The latter condition can be always achieved because $f(S^{p+q})$ can be moved aside a neighbourhood of $f(S^q)=s(0\times S^q)$ by an appropriate isotopy.
Join the images $s(S^p\times S^q)$ and $f(S^{p+q})$
by an arc, whose interior misses these images.
Let $\sigma^*(f)\colon S^p\times S^q\to S^m$ be the embedded connected sum of $s\colon S^p\times S^q\to S^m$ and $f\colon S^{p+q}\to S^m$ along this arc.
For $p=0$ or $q=0$ the manifold $S^p\times S^q$ is disconnected, thus we need to specify that the endpoints of the above arc belong to $s(D^p_+\times D^q_+)$ and $fD^{p+q}_+$.


\begin{figure}[htb]
\includegraphics[width=17cm]{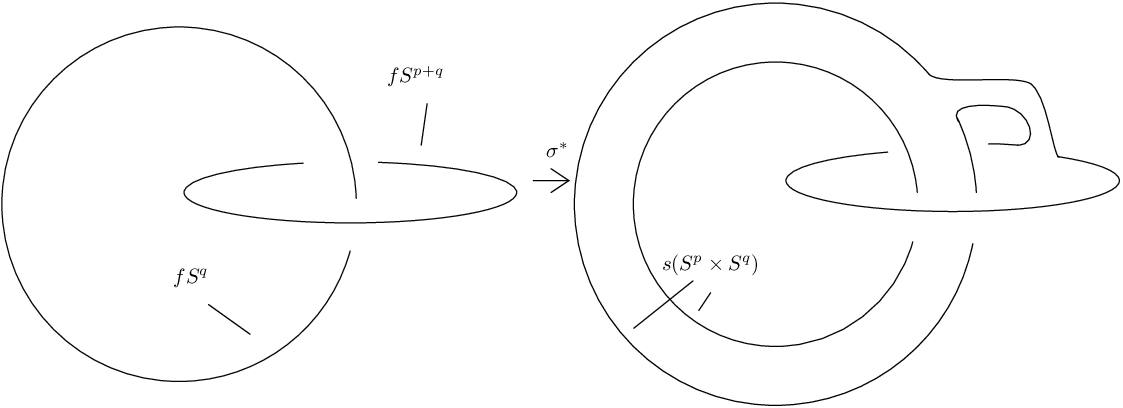}
\caption{Definition of the map $\sigma^*\colon {{E}_0^{m}(S^{p+q}\sqcup S^{q})}\to{{E}^{m}(S^{p}\times S^{q})}$ ($p=0, q=1, m=3$)}
\label{fig6}
\end{figure}

To show that the map $\sigma^*$ is well-defined, we need the following result due to A.~Haefliger \cite[Proof of Theorem~7.1]{Hae66C}.  For convenience of the reader we present the proof obtained in a discussion with A.~Skopenkov and A.~Zhubr.

\begin{lemma}\label{l-haefliger}
If two embeddings $S^{p+q}\sqcup S^q\to S^m$ with standard restrictions to $S^q$ are isotopic, then there is an isotopy between them fixed on $S^q$.
\end{lemma}

\begin{proof}[Proof] 
Take two isotopic embeddings $f_1,f_2\colon S^{p+q}\sqcup S^q\to S^m$ whose restrictions to $S^q$ are standard. Move the images $f_1(S^{p+q}),f_2(S^{p+q})$ aside $D^m_-$ along the great half-circles passing through the points $(0,\dots,0,\pm 1)\in\mathbb{R}^{m+1}$ and having no other common points with $s(S^q)$. This isotopy allows to assume that
$f_1(S^{p+q}),f_2(S^{p+q})\subset \interior D^m_+$.

Since $f_1$ and $f_2$ are isotopic it follows that there is an orientation-preserving diffeomorphism $h\colon S^m\to S^m$ fixed on $s(S^q)$
such that $hf_1=f_2$ on $S^{p+q}$. 
Then both the standard embedding $s\colon \mathbb{R}^{m-q}\times S^q\to S^m$
and the composition $hs\colon \mathbb{R}^{m-q}\times S^q\to S^m$ are tubular neighborhoods of $s(S^q)$ in $S^m$.
By the uniqueness of tubular neighborhoods \cite[Theorem~5.5 in Chapter~ 4]{Hir76} it follows that there is an isotopy $h_t\colon \mathbb{R}^{m-q}\times S^q\to S^m$ fixed on $s(0\times S^q)$ such that $h_0=s$, $h_1(\mathbb{R}^{m-q}\times S^q)=hs(\mathbb{R}^{m-q}\times S^q)$, and $(hs)^{-1}h_1$ is an automorphism of the trivial bundle $\mathbb{R}^{m-q}\times S^q\to S^q$.

The latter automorphism can be made identical on the contractible subset $\mathbb{R}^{m-q}\times D^q_+$ by an appropriate isotopy because automorphisms of the bundle $\mathbb{R}^{m-q}\times S^q\to S^q$ up to isotopy are in bijection with smooth maps $S^{q}\to V_{m-q,m-q}$ up to smooth homotopy. Thus we may assume that $h_1=hs$ on $\mathbb{R}^{m-q}\times D^q_+$.


Then $h_ts^{-1}f_1\colon S^{p+q}\sqcup S^q\to S^m$ is the required isotopy between $f_1$ and $f_2$.
Indeed, the assumption $f_1(S^{p+q}),f_2(S^{p+q})\subset \interior D^m_{+}= s(\mathbb{R}^{m-q}\times \interior D^q_+)$ implies that $h_ts^{-1}f_1$ is well-defined, $h_0s^{-1}f_1=ss^{-1}f_1=f_1$ and $h_1s^{-1}f_1=hf_1=f_2$ on $S^{p+q}$.
Finally, since
$h_t\colon \mathbb{R}^{m-q}\times S^q\to S^m$ is fixed on $s(0\times S^q)$ it follows that $h_ts^{-1}f_1=h_t(0\times \mathrm{Id})=s$ on ${S^q}$.
\end{proof}

\begin{lemma} \label{clwelldef} 
Assume that $m>p+q+2$. Then the map
$\sigma^*\colon {{E}_0^{m}(S^{p+q}\sqcup S^{q})}\to{{E}^{m}(S^{p}\times S^{q})}$
is well-defined by the above construction.
\end{lemma}

\begin{proof}
Let us show that the isotopy class $\sigma^*(f)$ depends neither on the choice of a particular representative $f\colon S^{p+q}\sqcup S^q\to S^m$ within an isotopy class nor on the choice of arc joining $s(S^p\times S^q)$ and $fS^{p+q}$. Take two isotopic embeddings $f_1,f_2\colon S^{p+q}\sqcup S^q\to S^m$ whose restrictions to $S^q$ are standard such that $f_1(S^{p+q})\cap s(D^{p+1}\times S^q)=f_2(S^{p+q})\cap s(D^{p+1}\times S^q)=\emptyset$. Take two arcs $x_1y_1$ and $x_2y_2$ joining $s(S^p\times S^q)$ with $f_1S^{p+q}$ and $f_2S^{p+q}$ respectively.

By Lemma~\ref{l-haefliger} there is an isotopy $f_t$ between $f_1$ and $f_2$ fixed on $S^q$. We may assume that $f_tS^{p+q}\cap s(D^{p+1}\times S^q)=\emptyset$ by uniformly moving $f_tS^{p+q}$ aside a neighbourhood of $fS^q$.

Join the two arcs $x_1y_1$ and $x_2y_2$ by a general position family of arcs $x_ty_t$ with the endpoints at $s(S^p\times S^q)$ and $f_tS^{p+q}$. Since $m>p+q+2$,  by general position it follows that the interior of each arc $x_ty_t$ misses $f_t S^{p+q}$ and $s(D^{p+1}\times S^q)$. Let $\sigma^*(f_t)\colon S^p\times S^q\to S^m$ be the embedding obtained by the connected summation of $s$ and $f_t$ along the arc $x_ty_t$. Strictly speaking, the embedding $\sigma^*(f_t)$ is not uniquely determined by the arc $x_ty_t$ but depends on the particular way of making connected summation along the arc. But clearly we can choose $\sigma^*(f_t)\colon S^p\times S^q\to S^m$ so that it smoothly depends on $t$.
Then $\sigma^*(f_t)$ is an isotopy between $\sigma^*(f_1)$ and $\sigma^*(f_2)$.
That is, the isotopy classes of $\sigma^*(f_1)$ and $\sigma^*(f_2)$ are the same.
\end{proof}

\begin{lemma} \label{sigma-homomorphism}
Assume that $m>2p+q+2$. Then the map $\sigma^*\colon {{E}_0^{m}(S^{p+q}\sqcup S^{q})}\to{{E}^{m}(S^{p}\times S^{q})}$ is a homomorphism.
\end{lemma}

\begin{proof} Represent two elements of ${{E}_0^{m}(S^{p+q}\sqcup S^{q})}$ by two embeddings $f_1,f_2\colon S^{p+q}\sqcup S^{q}\to S^m$ whose restrictions to $S^q$ are standard such that
$f_1(S^{p+q})\subset D^m_+-s(D^{p+1}\times S^q)$ and $f_2(S^{p+q})\subset D^m_--s(D^{p+1}\times S^q)$.

Join the three images $s(S^p\times S^q)$, $f_1S^{p+q}$, $f_2S^{p+q}$  pairwise by general position arcs $x_1y_1$, $z_1z_2$, $y_2x_2$ in cyclic order. Then the embedding $\sigma^*(f_1+f_2)$ is the embedded connected sum of the embeddings $f_1$, $f_2$, $s$ along the arcs $z_1z_2$ and $y_2x_2$. The embedding $\sigma^*(f_1)+\sigma^*(f_2)$ the embedded connected sum of the embeddings $f_2$, $s$, $f_1$ along the arcs $y_2x_2$ and $x_1y_1$. Joining $z_1z_2$ and $x_1y_1$ by a family of arcs analogously to the last paragraph of the proof of Lemma~\ref{clwelldef} we obtain that $\sigma^*(f_1+f_2)$ and $\sigma^*(f_1)+\sigma^*(f_2)$ are isotopic. That is, $\sigma^*(f_1+f_2)=\sigma^*(f_1)+\sigma^*(f_2)$ as isotopy classes.
\end{proof}

\begin{remark*} The group $E^{m}(S^{p+q})$ can be identified with the subgroup of the group ${{E}_0^{m}(S^{p+q}\sqcup S^{q})}$ formed by all the embeddings with unlinked components. 
If one makes such an identification then the map $\sigma^*\colon {{E}_0^{m}(S^{p+q}\sqcup S^{q})}\to{{E}^{m}(S^{p}\times S^{q})}$ extends the map $\sigma^*\colon {{E}^{m}(S^{p+q})}\to{{E}^{m}(S^{p}\times S^{q})}$ defined in \S\ref{sectprem}.
\end{remark*}

Now let us give two equivalent definitions of the map $\partial^*\colon  {{E}^m(D^p_-\times S^q)}\to {{E}^{m-1}_0(S^{p+q-1}\sqcup S^{q-1})}$. The first definition is easier to understand and to use in Section~\ref{sect3}, while the second one is easier to use in the proof of the exactness.

\subsection*{First definition of $\partial^*$}
\addcontentsline{subsection}{First definition of delta*}{First definition of delta*}

The map $\partial^*\colon  {{E}^m(D^p_-\times S^q)}\to {{E}^{m-1}_0(S^{p+q-1}\sqcup S^{q-1})}$ is the composition
\begin{equation*}
\xymatrix@1{
{{E}^m(D^p_-\times S^q)}          \ar[r]^-{Ob}   &
{\pi_{q-1}(S^{m-p-q-1})}        \ar[r]^-{Ze}   &
{{E}_0^{m-1}(S^{p+q-1}\sqcup S^{q-1}),}
}
\end{equation*}
where the maps $Ob$ and $Ze$ are defined as follows.


The map $Ob\colon {{E}^m(D^p_-\times S^q)}\to \pi_{q-1}(S^{m-p-q-1})$ is { the obstruction to the existence of a unit normal vector field} on the embedded manifold $D^p_-\times S^q$. To define the map, take an embedding $f\colon D^p_-\times S^q\to S^m$.
Take a (unique up to homotopy) trivialization of the normal bundle $\nu$ to the disc $f(D^p_-\times D^q_+)$ in $S^m$. Take a (unique up to homotopy) unit normal vector field on $f(D^p_-\times D^q_-)$. Thus the sphere $f(x\times S^{q-1})$, where $x\in D^p_-$ is fixed, is equipped both with a normal vector field and with a trivialization of the restriction of the normal bundle $\nu$. Using the trivialization identify each fiber of the normal bundle $\nu$ with the space $\mathbb{R}^{m-p-q}$. To each point $y\in S^{q-1}$ assign the unit vector of the field at the point $f(x,y)$. Thus a map $S^{q-1}\to S^{m-p-q-1}$ is defined. By definition, $Ob(f)\in \pi_{q-1}(S^{m-p-q-1})$ is the homotopy class of this map.

The map $Ze\colon \pi_{q-1}(S^{m-p-q-1})\to{{E}_0^{m-1}(S^{p+q-1}\sqcup S^{q-1})}$ is {\it the Zeeman construction} of the link with a given linking number. To define the map, consider the nested
standard embeddings $s\colon S^{q-1}\to S^{m-1}$ and $s\colon S^{p+q-1}\to S^{m-1}$. Take a trivialization of the normal bundle to the disc $s(D^{p+q-1}_+)$ in $S^{m-1}$. Analogously to the above each element $f\in \pi_{q-1}(S^{m-p-q-1})$ determines (up to homotopy) a unit vector field on $s(S^{q-1})$ normal to $s(S^{p+q-1})$. Push the sphere $s(S^{q-1})$ in the direction of this vector field. By definition, $Ze(f)$ is the link formed by the obtained embedding $S^{q-1}\to S^{m-1}$ and the standard embedding $s\colon S^{p+q-1}\to S^{m-1}$.

\begin{lemma} \label{delta-homomorphism} Assume $m>p+q+2$.
Then the map $\partial^*\colon  {{E}^m(D^p_-\times S^q)}\to {{E}_0^{m-1}(S^{p+q-1}\sqcup S^{q-1})}$ is a homomorphism.
\end{lemma}

\begin{proof} 
It suffices to show that both maps ${Ob}$ and ${Ze}$ are homomorphisms.

To prove that ${Ob}$ is a homomorphism, take two embeddings $f_1,f_2\colon D^p_-\times S^q\to S^m$.
Clearly, $f_1$ is isotopic to an embedding $f_1'$ satisfying the following properties; cf. Lemma~\ref{l1}:
\begin{itemize}
\item $f'_1\colon D^p_-\times (S^q-D^q_{++})\to S^m-D^m_{++}$ is the restriction of the standard embedding $s\colon S^p\times S^q\to S^m$;
\item $f'_1(D^p_-\times \mathrm{Int}D^q_{++})\subset \mathrm{Int}D^m_{++}$.
\end{itemize}
Analogously, $f_2$ is isotopic to an embedding $f_2'$ satisfying the same properties with $D^q_{++}$ and $D^m_{++}$ replaced by $r_qD^q_{++}$ and $r_mD^m_{++}$ respectively. The sum $f_1+f_2$ can be represented by the embedding $f\colon D^p_-\times S^q\to S^m$ given by the formula; cf. Lemma~\ref{l3}:
$$f(x,y)=
\begin{cases} f_1'(x,y),& y\in D^q_{+};\\ r_mr_{m+1} f_2'(x, r_qr_{q+1}y),& y\in D^q_-.\end{cases}
$$
Let $u$ be an $(m-p-q)$-framing of $s(D^p_-\times S^q)$. The first vector field of the framing $u$ determines a vector field $v$ on $s(D^p_-\times \partial D^{q-1}_{+})$. Extend $v$ to a normal vector field on $f(D^p_-\times (D^q_{++}\cup r_{q+1}D^q_{++}))$. The restrictions of the constructed field to the spheres $s(x\times \partial (D^q_{++}\cup r_{q+1}D^q_{++}))$, $s(x\times \partial D^q_{++})$, $s(x\times r_{q+1}\partial D^q_{++})$ together with the  framing $u$ determine the elements ${Ob}(f_1+f_2)$, ${Ob}(f_1)$, ${Ob}(f_2)$ respectively.
Thus ${Ob}(f_1+f_2)={Ob}(f_1)+{Ob}(f_2)$.

It is also easy to prove that ${Ze}$ is a homomorphism, cf. \cite[Theorem~10.1]{Hae66C}.
\end{proof}


\subsection*{Second definition of $\partial^*$.}
\addcontentsline{subsection}{Second definition of delta*}{Second definition of delta*}

The map $\partial^*\colon  {{E}^m(D^p_-\times S^q)}\to {{E}^{m-1}_0(S^{p+q-1}\sqcup S^{q-1})}$ is an obstruction to extend an embedding
$D^p_-\times S^q\to S^m$ to an embedding $S^p\times S^q\to S^m$. 
To define this obstruction we need two lemmas and an auxiliary definition. Recall that $B\subset \interior D^p_+\times \interior D^q_+$ is a fixed $(p+q)$-dimensional ball.

\begin{figure}[thbp]
\begin{tabular}{c}
\includegraphics[width=12cm]{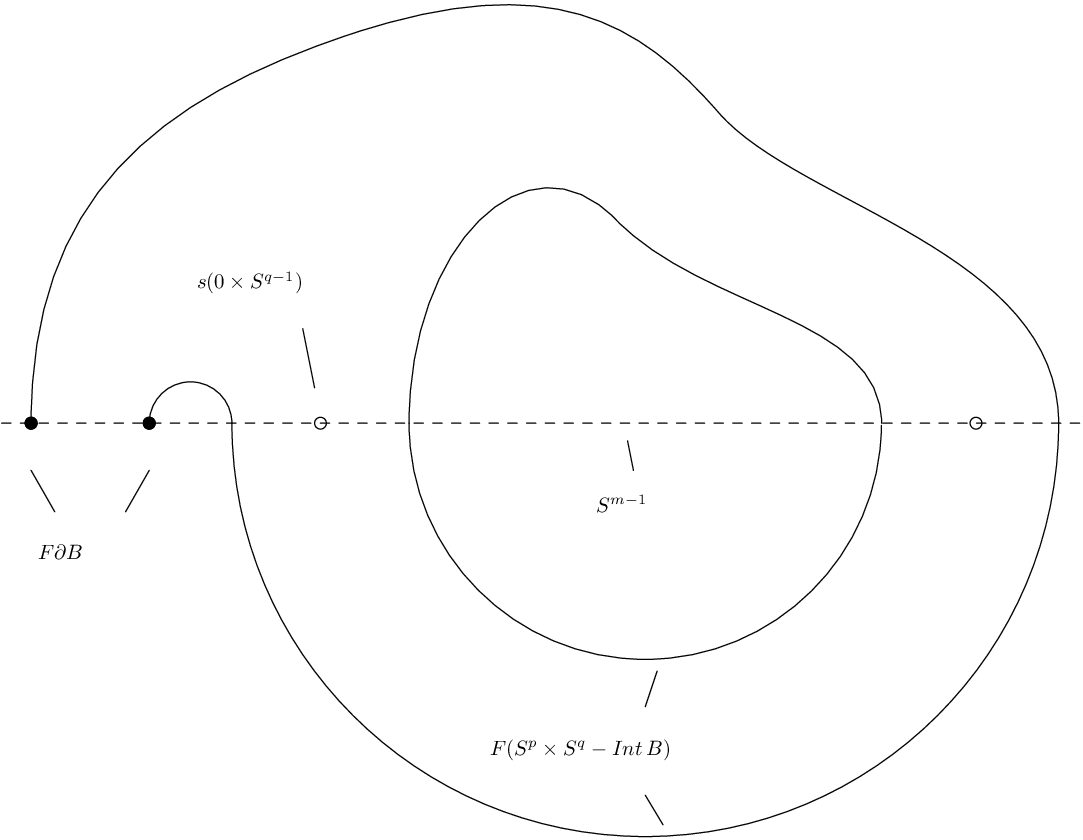}\\[10pt]
$\downarrow \partial^*$\\[10pt]
\includegraphics[width=12cm]{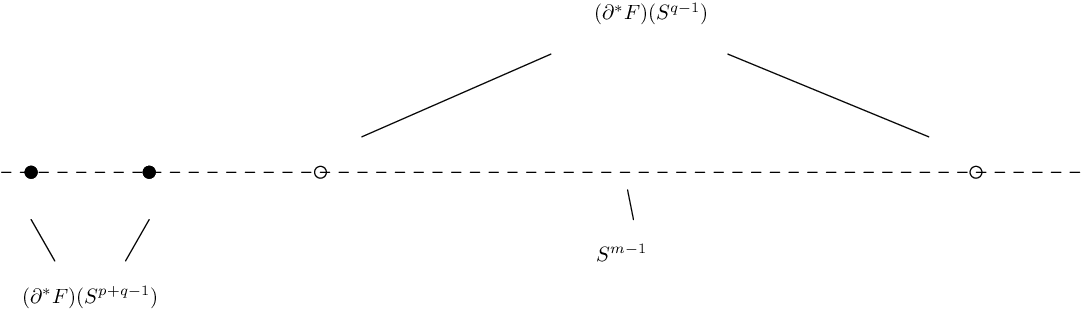}
\end{tabular}
\caption{The second definition of the map $\partial^*$ ($p=0, q=1, m=2$)}
\label{altdelta}
\end{figure}

%


\begin{lemma} \label{easy-extension} Assume that $m> 2p+q+2$. Then

\textup{(a)} any embedding $D^p_-\times S^q\to S^m$ extends to an embedding $S^{p}\times S^q-\interior B\to S^m$;

\textup{(b)} any two embeddings $S^{p}\times S^q-\interior B\to S^m$, whose restrictions to $D^p_-\times S^q$ are isotopic, are also isotopic.
\end{lemma}

\begin{proof} (a) Let $f\colon D^p_-\times S^q\to S^m$ be an embedding. Take a point $y\in D^q_-$.
Extend the embedding $f\colon D^p_-\times y\to S^m$ to a general position smooth map $g\colon S^p\times y\to S^m$.
Since $m> 2p+q+2$ it follows that the map $g$ does not have self-intersections and  $g(\mathrm{Int}D^p_+\times y)\cap f(D^p_-\times S^q)=\emptyset$. 

The restriction $f\colon \partial D^p_-\times D^q_-\to S^m$ defines a $q$-framing of the sphere $f(\partial D^p_-\times y)$.
The complete obstruction to extension of this $q$-framing to a $q$-framing of the disc $g(D^p_+\times y)$ belongs to the group
$\pi_{p-1}(V_{m-p,q})$. Since $m>2p+q+2$ it follows that the latter group vanishes. Thus the $q$-framing of the sphere $f(\partial D^p_-\times y)$ extends to a $q$-framing of the disc $g(D^p_+\times y)$. The latter $q$-framing defines an embedding
$D^p_-\times S^q\cup D^p_+\times D^q_-\to S^m$ extending the embedding $f\colon D^p_-\times S^q\to S^m$.
Clearly, the obtained embedding $D^p_-\times S^q\cup D^p_+\times D^q_-\to S^m$ extends to an embedding $S^{p}\times S^q-\interior B\to S^m$.

(b) This is a relative version of the argument from point (a).
\end{proof}


\begin{definition*} (See Figure~\ref{altdelta} to the top.)
An embedding $F\colon S^p\times S^q-\interior B\to S^m$ is {\it $B$-standardized} if
\begin{enumerate}
\item[(1)] $F\colon S^p\times D^q_-\to D^m_-$ is the standard embedding;

\item[(2)] $F(S^p\times\interior D^q_+-B)\subset\interior D^m_+$;

\item[(3)] $F(\partial B)\subset \partial D^{m}_-$; and

\item[(4)] $F(\partial B)\cap s(D^{p+1}\times S^{q-1})=\emptyset$.
\end{enumerate}

An isotopy $F_t\colon S^p\times S^q-\interior B\to S^m$ is {\it $B$-standardized} if for each $t\in I$
the embedding $F_t$ is $B$-standardized. A {\it $B$-standardized} embedding $F\colon S^p\times S^q\to S^m$ is defined analogously,
only the above properties~(3) and~(4) are replaced by

\begin{enumerate}
\item[(3')] $F(\interior B)\subset \interior D^m_-$;
\item[(4')] $F(\interior B)\cap s(D^{p+1}\times D^{q}_-)=\emptyset$.
\end{enumerate}
\end{definition*}

\begin{lemma}\label{lura} Assume that $m> 2p+q+2$. Then

\textup{(a)} any embedding $S^p\times S^q-\interior B\to S^m$ is
isotopic to a $B$-standardized embedding;

\textup{(b)} any embedding $S^p\times S^q\to S^m$ is
isotopic to a $B$-standardized embedding; and

\textup{(c)} any isotopy between $B$-standardized embeddings $S^p\times S^q-\interior B\to S^m$
is isotopic relative to the ends to a $B$-standardized isotopy.
\end{lemma}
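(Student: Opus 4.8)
The plan is to prove all three parts of Lemma~\ref{lura} by essentially the same ``push and tidy up'' argument, handling the pieces of the definition of $B$-standardized one condition at a time and using general position together with the codimension hypothesis $m>2p+q+2$.

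\textbf{Proof strategy for (a).} Let $F\colon S^p\times S^q-\interior B\to S^m$ be an embedding. First I would apply Standardization Lemma~\ref{l1}.a in the form of Claim~\ref{easy-extension}: the restriction $F|_{D^p_-\times S^q}$ is isotopic to a standardized embedding in the sense of the first Definition, and by Claim~\ref{easy-extension}.b such an isotopy of the restriction can be realized by an ambient isotopy of the whole $F$. So I may assume condition~(1) already holds, and after a small further ambient push condition~(2) holds: the image of $S^p\times(\interior D^q_+-B)$, which lies off $D^m_-$ since $F$ is an embedding extending the standard one, can be isotoped into $\interior D^m_+$ rel $S^p\times D^q_-$ by general position (here $\dim(S^p\times\interior D^q_+)=p+q<m$, so this is a routine ``collar'' argument). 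For condition~(3), I would isotope $F$ so that $F(\partial B)$ lies in the boundary sphere $\partial D^m_-=S^{m-1}$: the sphere $\partial B$ has dimension $p+q-1$, and by general position in $\partial D^m_+$ (dimension $m-1>2p+q+1\ge p+q$) we can first slide $F(\partial B)$ to $\partial D^m_-$ and then keep it there; the cell $B$ being removed gives us exactly the freedom to move this boundary sphere without disturbing the rest. For condition~(4), the image $s(D^{p+1}\times S^{q-1})$ has dimension $p+q$ and sits inside $\partial D^m_-$ of dimension $m-1$; since $(p+q-1)+(p+q)=2p+2q-1$ need not be $<m-1$ in general, but $(p+q-1)+(p+q)<m-1$ follows from $m>2p+q+2$ only if $q$ is small — here one uses instead that $\partial B$ bounds $B$ and we may push $F(\partial B)$ off $s(D^{p+1}\times S^{q-1})$ within $\partial D^m_-$ using that the \emph{removed} ball provides one extra dimension of movement, i.e. we vary $F$ on a collar of $\partial B$; general position of a $(p+q-1)$-sphere against a $(p+q)$-manifold inside an $(m-1)$-manifold gives an empty intersection once $m-1>(p+q-1)+(p+q)$, but when this fails one removes the intersection by an isotopy supported near $B$ exactly as in the proof of Claim~\ref{easy-extension}.a via the vanishing of the relevant Stiefel homotopy group.

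\textbf{Proof strategy for (b) and (c).} Part~(b) is the same argument as~(a) with the pair of conditions (3),(4) replaced by (3'),(4'); in fact (b) follows from (a) by noting that any embedding $S^p\times S^q\to S^m$ restricts to an embedding $S^p\times S^q-\interior B\to S^m$, standardizing the latter by~(a), and then observing that (3),(4) for the punctured torus force (3'),(4') for the filled-in torus after a small push of $F|_B$ into $\interior D^m_-$ off $s(D^{p+1}\times D^q_-)$ — again a general position argument on a $(p+q)$-cell, legitimate because $p+q<m$ and $p+q+(p+q+1)$-type intersection conditions are controlled by $m>2p+q+2$ (or by the vanishing of a Stiefel group). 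Part~(c) is the relative version: given a $B$-standardized isotopy's worth of data, i.e. an isotopy $F_t$ between two $B$-standardized embeddings, one runs the argument of~(a) with a parameter $t\in I$, keeping the endpoints fixed; this is legitimate because all the general position and homotopy-theoretic obstructions one kills in~(a) live in degrees one higher, and the relevant groups $\pi_{p}(V_{m-p,q})$ resp.\ $\pi_q(S^{m-p-q-1})$-type obstructions still vanish or are handled in the stable range thanks to $m>2p+q+2$.

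\textbf{Main obstacle.} The routine part is conditions~(1) and~(2), which are immediate from Lemma~\ref{l1} and a collar push. The delicate point is arranging conditions~(3)/(3') and~(4)/(4') \emph{simultaneously} without breaking (1)/(2): one must check that the isotopy moving $F(\partial B)$ (resp.\ $F(\interior B)$) into the correct boundary stratum and off the standard $D^{p+1}\times S^{q-1}$ can be chosen supported in an arbitrarily small neighbourhood of $B$, so that it does not disturb the already-standardized part $S^p\times D^q_-$. I expect this to be the heart of the argument, and it is exactly where the extra dimension afforded by the removed ball $B$, combined with the codimension hypothesis $m>2p+q+2$ (which makes the pertinent Stiefel manifold $V_{m-p,q}$ highly connected, cf.\ the proof of Claim~\ref{easy-extension}), is used; the relative case~(c) then requires only that the same obstruction groups, shifted up by one, still vanish, which they do in this range.
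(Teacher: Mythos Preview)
Your handling of conditions~(1) and~(2) via the Standardization Lemma is fine and matches the paper. The gap is in conditions~(3) and~(4).

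After achieving~(1) and~(2), the sphere $F'(\partial B)$ lies in $\interior D^m_+$. You propose to ``slide $F'(\partial B)$ to $\partial D^m_-$'' and then push it off $s(D^{p+1}\times S^{q-1})$ inside $S^{m-1}$ by general position. But $\partial B$ is a boundary component of the embedded manifold $S^p\times S^q-\interior B$: you cannot move $F'(\partial B)$ independently; any isotopy must drag the surrounding embedded $(p+q)$-manifold with it, so this is not a general-position statement about a free $(p+q-1)$-sphere in an $m$-ball. And for~(4) the dimension count you yourself write, $(p+q-1)+(p+q)<m-1$, i.e.\ $m>2p+2q$, is \emph{not} implied by $m>2p+q+2$ once $q>2$; your proposed salvage via ``the removed ball provides one extra dimension'' or an unspecified Stiefel-group vanishing is not a concrete argument, and nothing in the proof of Claim~\ref{easy-extension} supplies such a separation.

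The paper's argument for~(3) and~(4) is of a different nature and does not use general position at all. One performs a version of the standard surgery on $F'$: glue an embedding $s'\colon D^{p+1}\times S^{q-1}\to\interior D^m_+$ close to $s$ onto $F'\left|_{S^p\times D^q_{+\epsilon}-\interior B}\right.$ to obtain an embedded disc $G\colon D^{p+q}\to D^m_+$ whose boundary is exactly $F'(\partial B)$. Take a framed tubular neighbourhood $\bar D^m_+$ of $GD^{p+q}$. Then $F'(\partial B)\subset\partial\bar D^m_+$ and $s(D^{p+1}\times S^{q-1})\subset\partial\bar D^m_+$ hold \emph{automatically and disjointly}, since the former lies over $\partial(GD^{p+q})$ and the latter over an interior part of $GD^{p+q}$. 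Now both $\bar D^m_+$ and $D^m_+$ are tubular neighbourhoods of the same disc $F'(x\times D^q_+)$ (for $x\in D^p_-$), so uniqueness of tubular neighbourhoods plus isotopy extension yields an ambient isotopy $H_t$ fixed on $s(S^p\times D^q_-)$ with $H_1\bar D^m_+=D^m_+$; the embedding $H_1F'$ is then $B$-standardized. Parts~(b) and~(c) follow by the same construction. The key point you are missing is that one reshapes the half-sphere $D^m_+$ rather than trying to move $F'(\partial B)$ inside a fixed $D^m_+$.
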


\begin{proof} (a) Take an embedding $F\colon S^p\times S^q-\interior B\to S^m$. By a generalization of Lemma~\ref{l1}.a (Lemma~\ref{lstand} below) $F$ is isotopic to an embedding $F'\colon S^p\times S^q-\interior B\to S^m$ satisfying properties~(1) and~(2) of a $B$-standardized embedding.
Assume in addition that 
$F'=s$ on $S^p\times (S^q-D^q_{+\epsilon})$ and $F'(S^p\times D^q_{+\epsilon}-B)\subset D^m_{+\epsilon}$ for some $\epsilon>0$ such that $B\subset D^p_+\times D^q_{+\epsilon}$.

In order to achieve properties~(3) and~(4) we argue by the following plan. First we construct a neighborhood $\bar D^m_+\subset D^m_+$ of (certain smoothing of) the disc $F'(S^p\times D^q_{+\epsilon}-\interior B)\cup s(D^{p+1}\times \partial D^q_{+\epsilon})$ such that $s(D^{p+1}\times S^{q-1})\subset\partial\bar D^m_+$; see Figure~\ref{deltafin}. Then we move $\bar D^m_+$ to $D^m_+$ by an isotopy fixed on $s(D^{p+1}\times S^{q-1})$. This isotopy takes Figure~\ref{deltafin} to the top part of Figure~\ref{altdelta}. In particular, the isotopy transforms $F'\colon S^p\times S^q-\interior B\to S^m$ to a $B$-standardized embedding.

\begin{figure}[htb]
\includegraphics[width=14cm]{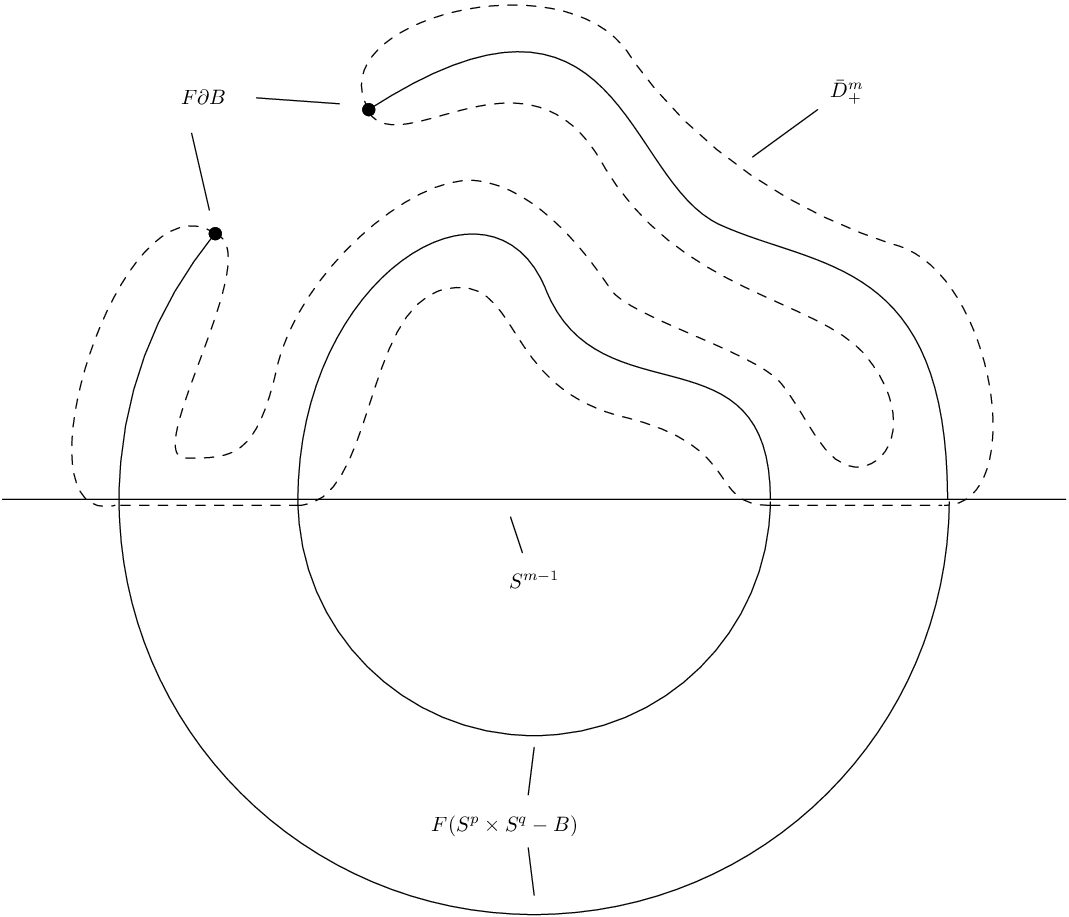}
\caption{Making an embedding $(S^0\times S^1-B)\to S^2$
$B$-standardized 
}
\label{deltafin}
\end{figure}

To construct the neighborhood $\bar D^m_+$ perform the following version of the standard embedded surgery over $F'$ (slightly different from the version defined in \S2).
Fix a diffeomorphism $D^{p+q}_+\cong (S^p\times D^q_{+\epsilon}-\mathrm{Int B})\cup D^{p+1}\times S^{q-1}$.
Denote $s'=\alpha_{\epsilon/2}s\colon D^{p+1}_{1-\epsilon}\times S^{q-1}\to \partial D^m_{+\epsilon/2}$. Extend it to an embedding
$s'\colon D^{p+1}\times S^{q-1}\to s(D^{p+1}\times (\interior D^q_{+}-\interior D^q_{+\epsilon}))$ smoothly blending it with the standard embedding $s\colon S^p\times D^q_{+\epsilon}\to D^m_{+\epsilon}$.
Gluing together the embeddings $s'\colon D^{p+1}\times S^{q-1}\to
\interior D^m_+-\interior D^m_{+\epsilon}$
and $F'\colon S^p\times D^q_{+\epsilon}-\interior B\to D^m_{+\epsilon}$,
we get an embedding $G\colon D^{p+q}_+\to D^m_+$. By definition, this is the result of the version of {\it the standard surgery}.

To proceed, we make the following convention. Denote by $D^{m-q}\Breve{\times} D_+^{q}\subset \mathbb{R}^{m-q}{\times} D_+^{q}$ the $C^1$-smooth disc given by the inequality
$x_1^2+\dots+x_{m-q}^2\le (y_{q+1}+1)^2$, where $(x_1,\dots,x_{m-q})\in \mathbb{R}^{m-q}$ and $(y_1,\dots,y_{q+1})\in D_+^{q}$. By \emph{a closed tubular neighborhood} of a $q$-dimensional disc embedded into $S^m$ we mean the restriction of a tubular neighborhood $\mathbb{R}^{m-q}{\times} D_+^{q}\to S^m$ to the disc $D^{m-q}\breve{\times} D_+^{q}$. This guarantees that
the image of a closed tubular neighborhood is a $C^1$-smooth disc.

Let $\bar D^m_+$ be the image of a closed tubular neighborhood of the $(p+q)$-dimensional disc $G(D^{p+q}_+)$ in $D^m_+$. Then $F'(\partial B)\subset \partial \bar D^m_+$. Since
$G\colon D^{p+1}\times S^{q-1}\to s(D^{p+1}\times(\interior D^q_+-\interior D^q_{+\epsilon}))$ is close to $s\colon D^{p+1}\times S^{q-1}\to S^{m-1}$ we may assume that $F(S^p\times (\interior D^q_+-\interior D^q_{+\epsilon}))\subset \interior \bar D^m_+$ and
$\partial \bar D^m_+\cap \partial D^m_+$ is a the image of a closed tubular neighborhood of $s(D^{p+1}_{1+\epsilon}\times S^{q-1})$ in $\partial D^m_+$. In particular, $F'\colon S^p\times D^q_+-\interior B\to\bar D^m_+$ is a proper embedding.

Let us construct an isotopy fixed on $s(D^{p+1}\times S^{q-1})$ moving $\bar D^m_+$ to $D^m_+$.
By the unkotting theorem moving the boundary the disc $F(x\times D^q_+)$, where $x=(0,\dots,0,1)\in D^p_-$, is properly knotted neither in $D^m_+$ nor in $\bar D^m_+$. Thus both $D^m_+$ and $\bar D^m_+$ are the images of some closed tubular neighborhoods
$g,\bar g\colon D^{m-q}\breve{\times} D_+^{q}\to D^m_+$
of the disc $F(x\times D^q_+)$.

We may assume that
$g(D^{m-q}{\times} S^{q-1})=\bar g(D^{m-q}{\times} S^{q-1})=\partial \bar D^m_+\cap \partial D^m_+$.
Indeed, say, the restriction $g\colon D^{m-q}{\times} S^{q-1}\to S^{m-1}$ is a closed tubular neighborhood of $F(x\times S^{q-1})$ in $\partial D^m_+$. The intersection $\partial \bar D^m_+\cap \partial D^m_+$ is also the image of a closed tubular neighborhood of $s(D^{p+1}_{1+\epsilon}\times S^{q-1})$ and hence of $F(x\times S^{q-1})$. By the uniqueness of closed tubular neighborhoods \cite[Theorem~6.5 in Chapter~4]{Hir76} there is an isotopy
$\bar h_t\colon D^{m-q}{\times} S^{q-1}\to \partial D^m_+$
such that $\bar h_0(D^{m-q}{\times} S^{q-1})=\partial \bar D^m_+\cap \partial D^m_+$ and $\bar h_1(D^{m-q}{\times} S^{q-1})=g(D^{m-q}{\times} S^{q-1})$. By the isotopy extension theorem \cite[Theorem~1.3 in Chapter~8]{Hir76} the isotopy
$\bar h_t\bar h_0^{-1}(D^{m-q}{\times} S^{q-1})=\partial \bar D^m_+\cap \partial D^m_+\to\partial D_+^m$
extends to an ambient isotopy $\hat h_t\colon D^m_+\to D^m_+$ such that $\hat h_0=\mathrm{Id}$ on $D^m_+$. By the isotopy extension theorem fixed on the boundary we may assume that $\hat h_1=\mathrm{Id}$ on $F(x\times D^q_+)$.
Then $\hat h_1g\colon D^{m-q}\breve{\times} D_+^{q}\to D^m_+$ is a closed tubular neighborhood of $F(x\times D^q_+)$ satisfying $\hat h_1g(D^{m-q}{\times} S^{q-1})=\partial \bar D^m_+\cap \partial D^m_+$. 

Now by the uniqueness of closed tubular neighborhoods 
there is an isotopy $\bar H_t\colon D^{m-q}\breve{\times} D_+^{q}\to D^m_+$
such that $\bar H_0(D^{m-q}\breve{\times} D_+^{q})=g(D^{m-q}\breve{\times} D_+^{q})=\bar D^m_+$, $\bar H_1(D^{m-q}\breve{\times} D_+^{q})=\bar g(D^{m-q}\breve{\times} D_+^{q})=D_+^{m}$, and
$\bar H_t(D^{m-q}{\times} S^{q-1})\subset \partial D^m_+$.
By the isotopy extension theorem 
the isotopy $\bar H_t\bar H_0^{-1}\colon \partial \bar D^m_+\cap\partial D^m_+\to\partial D^m_+$ extends to an ambient isotopy $\hat H_t\colon D^m_+\to D^m_+$ such that $\hat H_0=\mathrm{Id}$. The composition $H_t=\hat H_t^{-1}\bar H_t\bar H_0^{-1}\colon \bar D^m_+\to D^m_+$ is the required isotopy fixed on $s(D^{p+1}\times S^{q-1})$ moving $\bar D^m_+$ to $D^m_+$.
Indeed, $H_0(\bar D^m_+)=\hat H_0^{-1}\bar H_0\bar H_0^{-1}(\bar D^m_+)=\bar D^m_+$,  $H_1(\bar D^m_+)=\hat H_1^{-1}\bar H_1\bar H_0^{-1}(\bar D^m_+)=D^m_+$, and $H_t$ is fixed on $s(D^{p+1}\times S^{q-1})$ because $H_t=\hat H_t^{-1}\bar H_t\bar H_0^{-1}=\hat H_t^{-1}\hat H_t=\mathrm{Id}$ on $\partial \bar D^m_+\cap\partial D^m_+\supset s(D^{p+1}\times S^{q-1})$.


The required isotopy joining the embedding $F'$ with a $B$-standardized embedding is obtained by gluing the embeddings $H_tF'\colon S^p\times D^q_+-\interior B\to D^m_+$ and $s\colon S^p\times D^q_-\to D^m_-$ together.
Since $F'(S^p\times D^q_+-\interior B)\subset \bar D^m_+=H_0(\bar D^m_+)$ it follows that $H_tF'\colon S^p\times D^q_+-\interior B\to D^m_+$ is well-defined. Since $H_t$ is fixed on $s(D^{p+1}\times S^{q-1})$ it follows that the two embeddings agree:
$H_tF'=H_0s=s$ on $S^p\times S^{q-1}$.
Clearly, the gluing satisfies properties (1) and (2) above.
Since $F'(\partial B)\sqcup s(D^{p+1}\times S^{q-1})\subset\partial \bar D^m_+$ it follows that
the embedding $H_1F'\colon S^p\times D^q_+-\interior B\to D^m_+$ satisfies properties (3) and (4).
Assertion~(a) is proved.

(b), (c) These assertions are proved analogously (for (b) there is also  a shorter direct proof).
\end{proof}

Now we are ready to give the second definition of the map $\partial^*$.

\begin{definition*} (See Figure~\ref{altdelta}.)
Take an embedding $f\colon D^p_-\times S^q\to S^m$. Extend it to an embedding $F\colon S^p\times S^{q}-\interior B\to S^{m}$.
Take a $B$-standardized embedding $F'\colon S^p\times S^{q}-\interior B\to S^{m}$ isotopic to $F$.
Set $\partial^*(f)=F'\beta \sqcup s\colon S^{p+q-1}\sqcup S^{q-1}\to S^{m-1}$, where the diffeomorphism $\beta\colon S^{p+q-1}\to\partial B$ is fixed in advance.
\end{definition*}

\begin{lemma} \label{clwelldef2} Assume that $m>2p+q+2$. Then the map
$\partial^*\colon  {{E}^m(D^p_-\times S^q)}\to {{E}_0^{m-1}(S^{p+q-1}\sqcup S^{q-1})}$
is well-defined by this definition.
\end{lemma}

\begin{proof}
The construction of the definition is possible by Lemma~\ref{easy-extension}.a and Lemma~\ref{lura}.a.
The result of the construction does not depend on the choice of the extension $F\colon S^p\times S^{q}-\interior B\to S^{m}$
of the given embedding $f\colon D^p_-\times S^q\to S^m$ by Lemma~\ref{easy-extension}.b. The result does not
depend on the choice of the $B$-standardization $F'\colon S^p\times S^{q}-\interior B\to S^{m}$ by
Lemma~\ref{lura}.c. The result depends only on the isotopy class of the embedding $f\colon D^p_-\times S^q\to S^m$ by
Lemma~\ref{easy-extension}.b and~Lemma~\ref{lura}.c.
\end{proof}

\begin{lemma} \label{cleq} Assume that $m>2p+q+2$. Then the two given definitions of the map
$\partial^*\colon  {{E}^m(D^p_-\times S^q)}\to {{E}_0^{m-1}(S^{p+q-1}\sqcup S^{q-1})}$ are equivalent.
\end{lemma}

This is one of the most technical assertions of this paper. In fact neither the first definition of $\partial^*$ nor this lemma are used in the proof of Theorem~\ref{th3} (except the proof of the assertion that $\partial^*$ is a homomorphism which can also be proved directly). But the first definition is more convenient for applications in the proof of Corollary~\ref{essentials} and in \cite{Sko15}.

\begin{proof}[Proof of Lemma~\ref{cleq}]
For a while denote by $\partial^*_{I}$ and $\partial^*_{II}$ the maps given by the first and the second definitions of $\partial^*$ respectively. We use all the notation from the proof of Lemma~\ref{lura}.

First let us give a geometric construction of $\mathrm{Ob}(f)$.
Let $v$ be the unit vector field on $s'(0\times S^{q-1})$ normal to
$\partial D^m_{+\epsilon}$
and looking towards ${D}^{m}_-$. In particular, the standard embedding $s\colon S^{q-1}\to S^{m-1}$ is the result of pushing
the sphere $s'(0\times S^{q-1})$ in the direction of the field $v$ along the circular arc orthogonal to $S^{m-1}$.
The field $v$ is orthogonal to the disc $G(D^{p+q}_+)$.
Take a framing $u$ of the disc.
The framing $u$ identifies the fibers of the normal bundle to the disc with $\mathbb{R}^{m-p-q}$. Thus the pair $(u,v)$ determines an element of $\pi_{q-1}(S^{m-p-q-1})$ denoted also by $(u,v)$.

Let us show that $(u,v)={Ob} (f)$. Indeed, the vector field $v$ extends to the unit normal field on $s'(D^{p+1}\times S^{q-1})$ tangent to $s(D^{p+1}\times S^{q})\supset s'(D^{p+1}\times S^{q-1})$.
The sphere $s'(x\times S^{q-1})$, where $x=(0,\dots,0,-1)\in\mathbb{R}^{p+1}$, is equipped both with the extended vector field $v$ and the framing $u$. 
This equipment determines an element of $\pi_{q-1}(S^{m-p-q-1})$. This element is $(u,v)$ by the obvious homotopy. On the other hand,  the restriction of the vector field $v$ to the sphere $s'(x\times S^{q-1})=s(x\times \partial D^q_{+\epsilon})$ extends to the field on the disc $s(x\times (S^q-\mathrm{Int} D^q_{+\epsilon}))$, which is normal to $s(S^p\times S^q)$ and tangent to $s(D^{p+1}\times S^q)$. Moreover, the framing $u$ is defined on the disc $F(x\times D^q_{+\epsilon})\subset G(D^{p+q}_+)$. Hence $(u,v)={Ob}(f)$ by definition. 


Let us perform an isotopy putting the link $\partial^*_{II} (f)$ into a convenient position.
The embedding $s'\colon 0\times S^{q-1}\to G(D^{p+q}_+)$ is unknotted because it is isotopic to $s'=\alpha_{\epsilon/2}s\colon x\times S^{q-1}\to G(D^{p+q}_+)$ which extends to the embedding $F'\colon x\times D^q_{+\epsilon}\to G(D^{p+q}_+)$.
Thus applying isotopy extension theorem several times, we get
an ambient isotopy $h_t\colon S^m\to S^m$ such that
$h_1\bar D^m_+=D^{m}_+$,
$h_1G= s$ on $D^{p+q}_+$,
and $h_1s'=s'$ on $0\times S^q$.
By the uniqueness of a closed tubular neighborhood we may assume that the isotopy takes the closed normal tubular neighborhood of $G(D^{p+q}_+)$ with the image $\bar D^m_+$ to a prescribed closed tubular neighborhood of $D^{p+q}_+$ with the image $D^m_+$ up to an automorphism of the normal bundle. Thus we may assume that $h_1$ takes each circular arc starting at $s'(0\times S^{q-1})$ and orthogonal to $S^{m-1}$ to a circular arc orthogonal to $S^{m-1}$.


Let us show that $\partial^*_{II} (f)=Ze(u,v)$.
Clearly, the link $h_1F'\beta\sqcup h_1s\colon  S^{p+q-1}\sqcup S^{q-1}\to S^{m-1}$ is isotopic to $\partial_{II}^*(f)=H_1F'\beta\sqcup s\colon  S^{p+q-1}\sqcup S^{q-1}\to S^{m-1}$ (in spite of that both isotopies $H_t$ and $h_t$ move the sphere $S^{m-1}$). The embedding $h_1F'\beta\colon  S^{p+q-1}\to S^{m-1}$
is standard because it is the restriction of $h_1G$.
The embedding $h_1s\colon S^{q-1}\to S^{m-1}$ is the result of pushing the sphere $s'(0\times S^{q-1})$ in the direction of the vector field $h_1^*v$ along the circular arc orthogonal to $S^{m-1}$. This is the same as pushing the standard embedding $S^{q-1}\to S^{m-1}$ towards the vector field $h_1^*v$.
So, since the isotopy $h_t$ induces a homotopy of the vector field $v$ and the framing $u$ it follows that $\partial^*_{II}(f)={Ze}(h_1^*u,h_1^*v)={Ze}(u,v)=
{Ze}\,{Ob}(f)=\partial^*_{I}(f)$.
\end{proof}

\subsection*{Exactness at $E^m(S^p\times S^q)$}
\begin{proof}[Proof that $\image \sigma^*\subset \kernel i^*$] Represent an element of $E^m_0(S^{p+q}\sqcup S^q)$ by an embedding $f\colon S^{p+q}\sqcup S^q\to S^m$ such that the restriction to $f\left|_{S^q}\right.$ is standard and $fS^{p+q}\cap s(D^{p+1}\times S^q)=\emptyset$.
By definition, the restriction of $\sigma^*(f)$ to $D^p_-\times S^q$ coincides with the restriction of the standard embedding $s\colon S^p\times S^q\to S^m$. Thus $i^*\sigma^*(f)=i^*(s)=i^*(0)=0$.
\end{proof}

\begin{proof}[Proof that $\image \sigma^*\supset \kernel i^*$] Let $f\colon S^p\times S^q\to S^m$ be an embedding whose restriction to $D^p_-\times S^q$ is isotopic to the restriction of the standard embedding. Let us construct an element $g\in E^m_0(S^{p+q}\sqcup S^q)$
such that $\sigma^*(g)=f$.

Let us give the plan of the proof. First we perform an isotopy making the restriction $f\left|_{S^p\times S^q-\interior B}\right.$ standard. A possible result is shown in the first ``frame'' of Figure~\ref{exactness1}. Then we remove the intersection of the image $f(\interior B)$ with $s(D^{p+1}\times S^q)$ by an isotopy of $f$ fixed on  $S^p\times S^q-\interior B$. A possible result is shown in the second ``frame'' of Figure~\ref{exactness1}.
A surgery of the obtained embedding gives the required link $g\colon S^{p+q}\sqcup S^q\to S^m$. The result of the surgery is shown in the third ``frame'' of Figure~\ref{exactness1}. The resulting embedding is of the form
$\sigma^*(g)$ for some embedding $g\colon S^{p+q}\sqcup S^q\to S^m$.

\begin{figure}[htb]
\includegraphics[width=7.9cm]{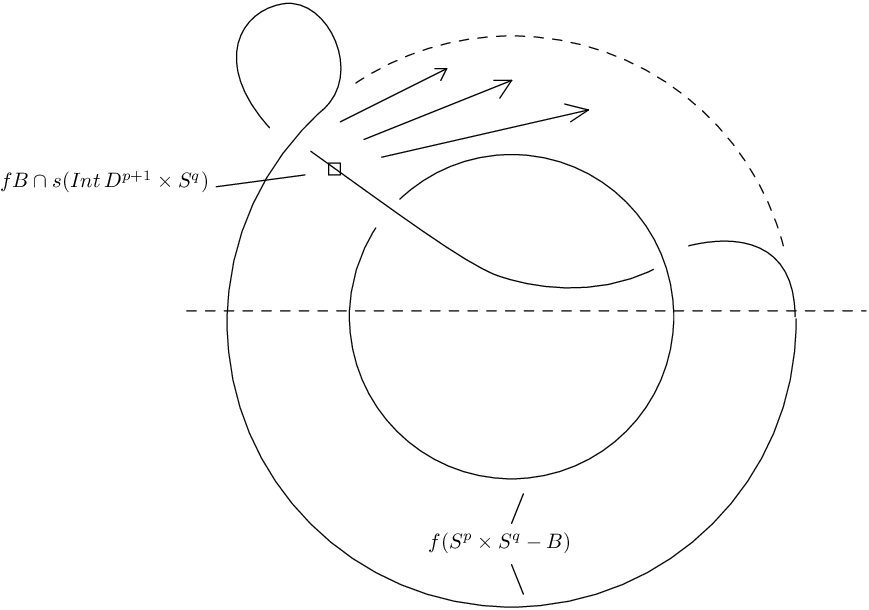}
\begin{tabular}[c]{c}
$\quad\leadsto\quad$\\[5cm]
\end{tabular}
\includegraphics[width=6cm]{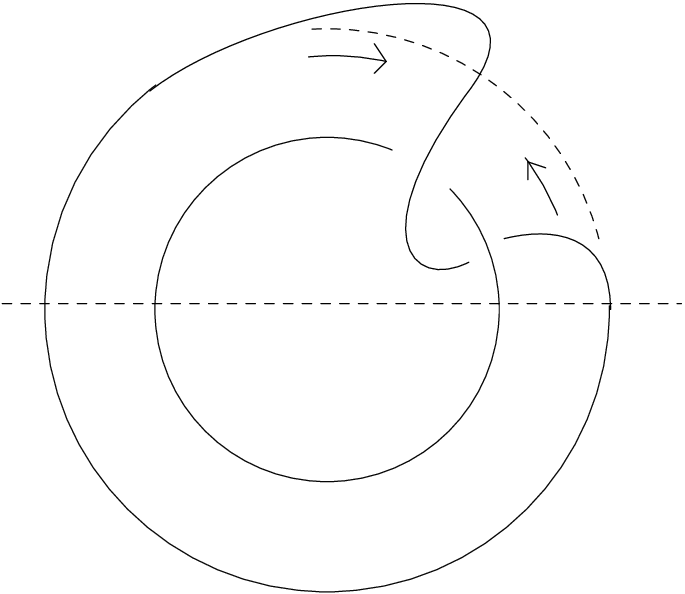}
\begin{tabular}[c]{c}
$\quad\leadsto\quad$\\[5cm]
\end{tabular}
\vspace{-2cm}
\flushleft{$\qquad\qquad$
\includegraphics[width=6.1cm]{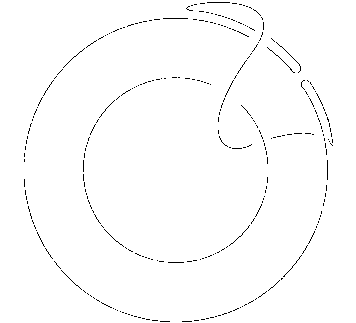}}
\caption{A movie-proof that $\image \sigma^*\supset \kernel i^*$ ($p=0,q=1,m=3$)}
\label{exactness1}
\end{figure}

Let us make the restriction $f\colon S^p\times S^q-\interior B\to S^m$ standard.
By Standardization Lemma~\ref{l1}.a one can make the embedding $f\colon S^p\times S^q\to S^m$ standardized. Thus we may assume that $f$ and $s$ coincide on $S^p\times D^q_-$.
Since the restriction of $f$ to $D^p_-\times S^q$ is isotopic to the standard embedding, by isotopy extension theorem it follows that there is an isotopy of $f$ fixed on $S^p\times D^q_-$ making $f$ standard on $D^p_-\times S^q$. Again by isotopy extension theorem one can make $f$ and $s$ equal also on $S^p\times S^q-\rho(\frac{1}{2}D^{p+q}_-)$, where $\beta\colon D^{p+q}_-\to B$ is a fixed diffeomorphism. Denote the embedding obtained after all the above isotopies still by $f\colon S^p\times S^q\to S^m$.

Let us remove the intersection of $f(\interior B)$ with the image $s(D^{p+1}\times S^q)$.
Since $f$ is standardized it follows that this intersection is a subset of the ball $s(D^{p+1}\times D^q_+)$. 
Take closed tubular neighborhoods of the ball and its face $s(B)$ such that the interior of the union of their images is disjoint with $s(S^p\times S^q-\interior B)$. Take a vector field with the support in the union such that all integral trajectories starting in the ball leave the ball through the face. By \cite[Theorem~1.2 in Chapter~8]{Hir76} there is an ambient isotopy of $S^m$ fixed outside the union moving $f(\interior B)$ along the vector field 
until it becomes disjoint with $s(D^{p+1}\times D^q_+)$. 
Denote the embedding obtained by the isotopy still by $f\colon S^p\times S^q\to S^m$.

Let us perform a surgery over $f$. 
Extend $s\beta\colon D^{p+q}_-\to B\subset S^p\times S^q\to S^m$ to a smooth unknotted embedding $s''\colon S^{p+q}\to S^m$ such that $s''(D^{p+q}_+)\subset s((D^{p+1}-0)\times S^q)$.
Define an embedding $g\colon S^{p+q}\to S^m$ to be the result
of gluing $s''\colon D^{p+q}_+\to S^m$ and $f\beta\colon D^{p+q}_-\to S^m$ together. Formally, put
$$
g(x)=
\begin{cases}
f\beta(x),               & \text{ for }x\in D^{p+q}_-; \\
s''(x)                  & \text{ for }x\in D^{p+q}_+.
\end{cases}
$$
The map $g$ is $C^1$-smooth because $f$ and $s$ coincide on $S^p\times S^q-\interior B$ and it is an embedding because $f(\interior B)$ is disjoint with $s(D^{p+1}\times S^q)\supset s''(D^{p+q}_+)$.

Equivalently, $g\sqcup s\colon S^{p+q}\sqcup S^p\times S^q\to S^m$ is obtained from $f\colon  S^p\times S^q\to S^m$ by an embedded surgery along an appropriate framed $(p+q)$-dimensional disc close to $s(B)$. This implies that $f$ is isotopic to an embedded connected sum of $g$ and $s$, because the connected summation is the ``inverse'' embedded surgery along a framed $1$-dimensional disc.

It remains to extend the embedding $g\colon S^{p+q}\to S^m$ to the link $g\colon S^{p+q}\sqcup S^q\to S^m$, whose restriction to 
$S^q$ is standard, 
and we get $\sigma^*(g)=f$.
\end{proof}

%


\subsection*{Exactness at $E^m(D^p\times S^q)$}


\begin{proof}[Proof that $\image i^*\subset \kernel \partial^*$]
Take an embedding $f\colon S^p\times S^q\to S^m$. By Lemma~\ref{lura}.b it follows that $f$ is isotopic to a $B$-standardized embedding
$f'\colon S^p\times S^q\to S^m$. By properties~(3') and~(4') of a $B$-standardized map it follows that the embedding $\partial^*i^*(f)=f'\beta\sqcup s\colon S^{p+q-1}\sqcup  S^{q-1}\to S^{m-1}$ extends to the embedding $f'\beta\sqcup s\colon D^{p+q-1}_-\sqcup D^{q}_-\to D^m_-$.
Thus the link $\partial^*i^*(f)$ is trivial by the analogue of Lemma~\ref{l2} for links. Hence $\partial^*i^*(f)=0$.
\end{proof}

\begin{proof}[Proof that $\image i^*\supset \kernel \partial^*$]
Let $f\colon D^p_-\times S^q\to S^m$ be an embedding such that $\partial^* (f)=0$. By Lemma~\ref{easy-extension}.a it extends to an embedding $F\colon S^p\times S^q-\interior B\to S^m$.
By Lemma~\ref{lura}.a the embedding $F$ is isotopic to a $B$-standardized embedding $F'\colon S^p\times S^q-\interior B\to S^m$. Since $\partial^*(f)=0$ it follows that the link $F'\beta\sqcup s\colon S^{p+q-1}\sqcup S^{q-1}$ is trivial. By Lemma~\ref{l-haefliger} the embedding $F'\beta\colon S^{p+q-1}\to \partial D^{m}_--s(D^{p+1}\times S^{q-1})$ extends to
a proper embedding $G\colon D^{p+q}_-\to D^m_- - s(D^{p+1}\times D^q_-)$ orthogonal to $S^{m-1}$. 
Gluing together the embeddings $F'\colon S^p\times S^q-\interior B\to S^m$ and $G\beta^{-1}\colon B\to D^m_-- s(D^{p+1}\times D^q_-)$ we get an embedding $G'\colon S^p\times S^q\to S^m$. Clearly, $i^*(G')=f$.
\end{proof}

\subsection*{Exactness at $E^m_0(S^{p+q}\sqcup S^q)$}

\begin{proof}[Proof that $\image \partial^*\subset \kernel \sigma^*$]
Let $f\colon D^{p}_-\times S^{q+1}\to S^{m+1}$ be an embedding. By Lemma~\ref{easy-extension}.a it extends to an embedding $F\colon S^p\times S^{q+1}-\interior B\to S^{m+1}$.
By Lemma~\ref{lura}.a the embedding $F$ is isotopic to a $B$-standardized embedding $F'\colon S^p\times S^{q+1}-\interior B\to S^{m+1}$.
Join $F'(\partial B)$ and $F'(D^p_+\times S^{q})$ by two general position arcs: $l\subset \partial D^{m+1}_+$ and $l'\subset F'(S^p\times D^{q+1}_+-\interior B)$.
Span the union $l\cup l'$ by a general position smooth $2$-dimensional disc $L\subset D^{m+1}_+$ orthogonal to $\partial D^{m+1}_+$ with corners at the endpoints of $l$ and $l'$. Take a framing of $L$ such that the first $p+q$ vectors at each point of the arc $l'$ are tangent to $F'(S^p\times D^{q+1}_+-\interior B)$.
Perform an embedded surgery over $F'\colon S^p\times D^{q+1}_+-\interior B\to D^{m+1}_+$ along the framed disc $L$. We get an embedding $G\colon S^p\times D^{q+1}\to D^{m+1}_+$ whose restriction to the boundary is
the connected sum of $F'\colon \partial B\to S^{m}$ and $F'\colon S^p\times S^{q}\to S^{m}$ along the arc $l$.
By Lemma~\ref{l2} it follows that the latter connected sum is isotopic to the standard embedding $S^p\times S^{q}\to S^{m}$. On the other hand, the connected sum is $\sigma^*\partial^*(f)$.
Thus $\sigma^*\partial^*(f)=0$.
\end{proof}

\begin{figure}[htp]
\includegraphics[width=17cm]{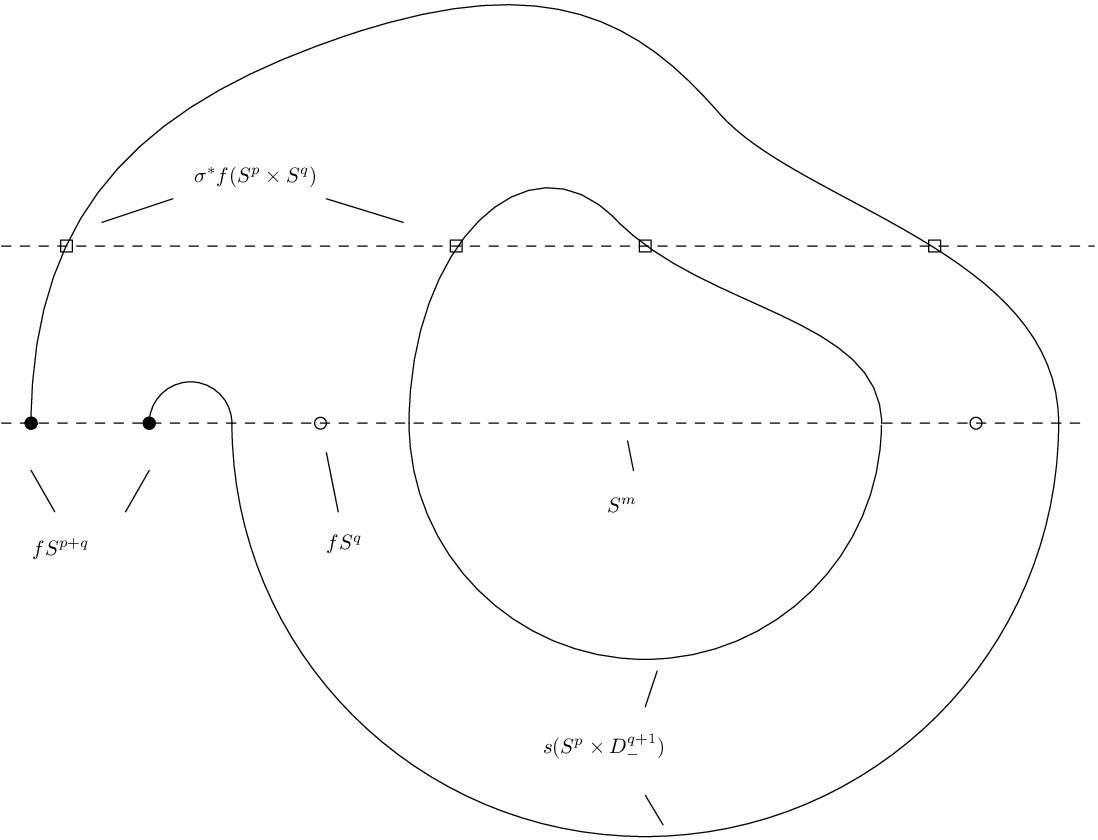}
\caption{The 
proof that $\image \partial^*\supset \kernel \sigma^*$ ($p=0, q=0, m=1$)}\label{exactness3}
\end{figure}

\begin{proof}[Proof that $\image \partial^*\supset \kernel \sigma^*$]
Take an embedding $f\colon S^{p+q}\sqcup S^{q}\to S^m$ whose restriction to $S^q$ is standard
and such that $fS^{p+q}\cap s(D^{p+1}\times S^q)=\emptyset$ and $\sigma^*(f)=0$.

Consider the embedding $\alpha_\epsilon\sigma^*(f)(\mathrm{Id}\times \alpha_\epsilon^{-1})\colon S^p\times \partial D^{q+1}_{+\epsilon}\to \partial D^{m+1}_{+\epsilon}$.
Since it is isotopic to the standard one by Lemma~\ref{l2} it follows that it extends to a proper embedding $F\colon S^p\times D^{q+1}_{+\epsilon}\to D^{m+1}_{+\epsilon}$. The embedding $F$ is shown in Figure~\ref{exactness3} above both dashed lines. Recall that
$\sigma^*(f)$ is a connected sum of $f\colon S^{p+q}\to S^m$ and $s\colon S^{p}\times S^{q}\to S^m$ along an arc.
Attach the trace of the surgery performing this connected summation to the embedding~$F$. This trace is shown in Figure~\ref{exactness3} between the dashed lines. We get a proper embedding
$F'\colon S^p\times D^{q+1}_+ - \interior B\to D^{m+1}_+$, whose restriction to the boundary is the union
of $f\colon S^{p+q}\to S^{m}$ and $s\colon S^p\times S^q\to S^m$.
Attach the standard embedding $s\colon S^{p}\times D^{q+1}_-\to D^{m+1}_-$
to the embedding $F'$. This embedding is shown in Figure~\ref{exactness3} below both dashed lines. We get an embedding $F''\colon S^p\times S^{q+1}-\interior B\to S^{m+1}$. 

It follows by definition that the latter embedding is $B$-standardized.

Define the embedding $g\colon D^p_-\times S^{q+1}\to S^{m+1}$ to be the restriction of $F''\colon S^p\times S^{q+1}-\interior B\to S^{m+1}$.
Then by definition $\partial^*(g)=f$.
\end{proof}

The proof of Theorem~\ref{th3} is completed.

\section{Applications}\label{sect3}





\subsection*{Finiteness criterion}

Now let us apply the sequence of Theorem~\ref{th3} to determine precisely when the set $E^m(S^p\times S^q)$ is finite, i.e., to prove Theorem~\ref{th2}.

%
%
%



Denote by $E^m_{\mathrm{U}}(S^p\sqcup S^q)$ be the group of isotopy classes of smooth embeddings $S^p\sqcup S^q\to S^m$, whose restrictions to \emph{both} components $S^p$ and $S^q$ are unknotted. For a finitely generated Abelian group $G$ identify $G\otimes \mathbb{Q}$ with $\mathbb{Q}^{\rank G}$.
We are going to use tacitly the following isomorphisms (see \cite[Theorem 2.4]{Hae66C} and \cite[Theorem~1.13]{CFS11} respectively):
\begin{align*}
&E^{m}(S^{p}\sqcup S^{q})
\cong
E^{m}_0(S^{p}\sqcup S^{q})\oplus E^{m}(S^{q})
\cong
E^m_{\mathrm{U}}(S^{p}\sqcup S^q)\oplus E^{m}(S^{p})\oplus E^{m}(S^{q}),\\
&E^m(D^p\times S^q)\otimes\mathbb{Q} \cong E^m(S^q)\otimes\mathbb{Q}\oplus \pi_q(V_{m-q,p})\otimes\mathbb{Q}
\end{align*}
holding for $p,q< m-2$ and $p+q < m-2$, respectively.

\begin{proof}[Proof of Corollary~\ref{essentials}]
Let us prove that the map $\partial^*\colon  E^m(D^p\times S^q)\to E^{m-1}_0(S^{p+q-1}\sqcup S^{q-1})$ has finite image for $m>2p+q+2$, $m< p+{3q}/{2}+2$.
If $p=0$ then this follows immediately from Theorem~\ref{th3} because the map $i^*\colon E^m(S^0\times S^q)\to E^m(D^0\times S^q)$ is surjective.
Assume further that $p\ge1$. By the first definition of $\partial^*$ it suffices to prove that at least one of the groups $E^m(D^p\times S^q)$ and $\pi_{q-1}(S^{m-p-q-1})$ is finite. The assumptions $m>2p+q+2$ and $m<p+{3q}/{2}+2$ imply that $m\le 2q$. So by Theorem~\ref{th1f} the group
$E^m(D^p\times S^q)$ is finite unless $4\,|\,q+1$. By the Serre finiteness criterion for homotopy groups of spheres the group $\pi_{q-1}(S^{m-p-q-1})$ is finite unless $4\,|\,q$. So the map $\partial^*\colon E^m(D^p\times S^q)\to E^{m-1}_0(S^{p+q-1}\sqcup S^{q-1})$ has finite image. Analogously the map $\partial^*\colon E^{m+1}(D^p\times S^{q+1})\to E^{m}_0(S^{p+q}\sqcup S^{q})$ has finite image because the inequality  $m\le 2q$ implies  $m+1\le 2(q+1)$.

This implies that the sequence of Theorem~\ref{th3} tensored by $\mathbb{Q}$ splits for $m<p+3q/{2}+2$. Thus $E^m(S^p\times S^q)\otimes\mathbb{Q}\cong E^{m}_0(S^{p+q}\sqcup S^{q})\otimes\mathbb{Q}\oplus E^m(D^p\times S^q)\otimes\mathbb{Q}$. By the isomorphisms stated in the beginning of this section
the corollary follows.
\end{proof}


\begin{proof}[Proof of Theorem~\ref{th2}]
(1) {\it ``Only if'' part.} 
If neither conditions in the list of Theorem~\ref{th2} hold then 
$E^m_{\mathrm{U}}(S^{p+q}\sqcup S^q)$, $E^{m}(S^{p+q})$,
and $E^m(D^{p}\times S^{q})$
are finite by Theorems~\ref{th1}--\ref{th1b}.
By Corollary~\ref{essentials} (or alternatively by the exactness at the term~$E^m(S^p\times S^q)$ in Theorem~\ref{th3}) the result follows.

(2) {\it Case when $q+1$ is divisible by $4$}. 
If $m\le p+{3}q/{2}+{1}/{2}$ then by Theorem~\ref{th1f} the group $E^{m}(D^p\times S^q)$ is infinite.
If $m= p+{3}q/{2}+{3}/{2}$
then $2(m-p-q-2)+(m-q-2)=(m-3)$ and $m-p-q$ is odd; thus $(2;1)\in FCS (m-p-q,m-q)$ and by Theorem~\ref{th1b} the group $E^m_{\mathrm{U}}(S^{p+q}\sqcup S^q)\subset E^m_{0}(S^{p+q}\sqcup S^q)$ is infinite. By Corollary~\ref{essentials}
the result follows.

(3) {\it The rest of ``if'' part}. If $p+q+1$ is divisible by $4$ then by Theorem~\ref{th1} the group $E^{m}(S^{p+q})$ is infinite because $m<p+{3q}/{2}+2$.
If there is a point $(x,y)\in FCS (m-p-q,m-q)$ such that $(m-p-q)x+(m-q)y=(m-3)$ then by Theorem~\ref{th1b} the group $E^m_{\mathrm{U}}(S^{p+q}\sqcup S^q)$ is infinite. By Corollary~\ref{essentials} and the isomorphisms stated at the beginning of this section
the result follows.
\end{proof}

\subsection*{Knotted connected sums}

Let us give the following easy example of another application of the standard surgery; see~\S\ref{sectprem}.

\begin{theorem}\label{thconsum} For each $q_1\ge p_1\ge p_2$, $m > 2p_1+q_1+2$, $q_2+p_2=q_1+p_1$ there is an exact sequence 
\begin{equation*}
{{E}^{m}(S^{p_1+q_1})} \xrightarrow{\sigma^*}
{{E}^m(S^{p_1}\times S^{q_1})} \oplus {{E}^{m}(S^{p_2}\times S^{q_2}}) \xrightarrow{a}
{{E}^m(S^{p_1}\times S^{q_1} \,\# \,S^{p_2}\times S^{q_2})}.
\end{equation*}
\end{theorem}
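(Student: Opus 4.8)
The plan is to build the sequence by glueing together two instances of the connected-sum construction for knotted tori and comparing it with the surgery machinery developed above. Write $T_i = S^{p_i}\times S^{q_i}$ and $T = T_1 \mathbin{\#} T_2$; since $q_2+p_2 = q_1+p_1$ the two tori have the same dimension $p_1+q_1$, so the connected sum $T$ is a well-defined closed $(p_1+q_1)$-manifold and for $m > 2p_1+q_1+2$ all three sets $E^m(T_1)$, $E^m(T_2)$, $E^m(T)$ carry the parametric (or ordinary, when $p_i=0$) connected-sum group structure. The first map $j^* \colon E^m(S^{p_1+q_1}) \to E^m(T_1)\oplus E^m(T_2)$ is $f \mapsto (\sigma^*f, \sigma^*f)$, where $\sigma^*$ is the action of knots from \S\ref{sectprem}; here I use that $p_2 \le p_1$, so the inequality $m > 2p_1+q_1+2$ also implies $m > 2p_2+q_2+2$ and $\sigma^*$ is defined on both factors. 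The second map $a^* \colon E^m(T_1)\oplus E^m(T_2) \to E^m(T)$ sends $(g_1,g_2)$ to the embedded connected sum $g_1 \mathbin{\#} g_2$ performed along the sphere that defines $T_1 \mathbin{\#} T_2$; one standardizes both embeddings near the disc on which the surgery is done and then glues.

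First I would check that $j^*$ and $a^*$ are homomorphisms for $m > 2p_1+q_1+2$, which is a routine repetition of the arguments in Claims~\ref{cl1} and~\ref{sigma-homomorphism} (standardize, then the connected-sum operations commute with the group additions by a general-position isotopy of arcs). Next, exactness at $E^m(S^{p_1+q_1})$ is not asserted, so only the middle term needs work. For $\image j^* \subset \kernel a^*$: if $f\colon S^{p_1+q_1}\to S^m$ is a knot, then $\sigma^*f \mathbin{\#} \sigma^*f$ is, up to isotopy, the connected sum of the standard torus $s\colon T\to S^m$ with two separated copies of $f$ glued by an arc running ``through'' the connecting sphere; an isotopy sliding one copy of $f$ along that sphere cancels it against the other copy (here I would model the connecting sphere as the standard $S^{p_1+q_1}\subset S^m$ and slide $f$ around it, using $m > 2p_1+q_1+2$ to keep everything embedded by general position), so $a^*j^*f = 0$.

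The main step is $\kernel a^* \subset \image j^*$. Suppose $(g_1,g_2)$ satisfies $g_1 \mathbin{\#} g_2 = 0$, i.e.\ $g_1\mathbin{\#}g_2$ extends to an embedding $S^{p_1}\times S^{q_1}\mathbin{\#}S^{p_2}\times S^{q_2}$ bounding a $(p_1+q_1+1)$-manifold in $D^{m+1}$, by the Triviality Criterion (Claim~\ref{l2}); alternatively, there is an ambient isotopy of $S^m$ carrying $g_1\mathbin{\#}g_2$ to the standard $T$. Cut this isotopy along the connecting sphere: the standardized picture of $g_1 \mathbin{\#} g_2$ contains an embedded copy of $S^{p_1+q_1}$ (the sphere along which the sum was taken), and since that sphere lies in the trivial complement region it bounds an embedded disc $D^{p_1+q_1+1}$ in the ``filling''; pushing this disc back into $S^m$ realizes an embedding $f\colon S^{p_1+q_1}\to S^m$, and I claim $\sigma^*f$ is isotopic to $g_1$ and, simultaneously, to $g_2$. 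Indeed, surgering $g_1 \mathbin{\#} g_2 \simeq s\colon T\to S^m$ along that sphere separates it into the two ``halves'' $T_1$ and $T_2$, each of which, after capping the surgery sphere by the disc just constructed, becomes $\sigma^*f \mathbin{\#} (\text{standard})$; so $g_i = \sigma^*f$ for $i=1,2$ and $(g_1,g_2) = j^*f$. The hard part is precisely making this ``cut the nulhomotopy along the connecting sphere'' argument rigorous: one must show that the connecting sphere, after standardizing $g_1\mathbin{\#}g_2$ and the trivializing isotopy, can be assumed to bound a disc in the filling region meeting the rest of the picture regularly, which is a Standardization-Lemma-type statement (Lemmas~\ref{l1} and~\ref{lura}) combined with general position in codimension $\ge 3$, i.e.\ $m > 2p_1+q_1+2$. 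Everything else — well-definedness independent of the choice of connecting arcs and discs, and commutation with the group structures — follows the same pattern as Claims~\ref{clwelldef}--\ref{sigma-homomorphism} and the exactness proofs for Theorem~\ref{th3}.
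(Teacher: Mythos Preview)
There is a genuine error in the definition of the first map. You set $j^*(f)=(\sigma^*f,\sigma^*f)$, but then
\[
a^*j^*(f)\;=\;(s_1\#f)\#(s_2\#f)\;=\;s_1\#s_2\#2f,
\]
which is the marked element only when $2f=0$ in $E^m(S^{p_1+q_1})$. Your ``sliding one copy of $f$ along the connecting sphere cancels it against the other'' does not hold: sliding a local knot through the connecting tube is an ambient isotopy that does not reverse any orientation, so it moves $f$ intact from one side to the other rather than producing $-f$. The paper's map is $\sigma^*(f)=(s_1\#f,\,s_2\#(-f))$, and then $a\sigma^*(f)=s_1\#f\#s_2\#(-f)=s_1\#s_2$ is immediate from $f\#(-f)=0$ in the group $E^m(S^{p_1+q_1})$.

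For the inclusion $\kernel a^*\subset\image j^*$ your approach is also quite different from the paper's and, as written, is not rigorous. You speak of ``the sphere along which the sum was taken'' as an $S^{p_1+q_1}$, but the connecting sphere of a connected sum of $(p_1+q_1)$-manifolds has dimension $p_1+q_1-1$; it is not clear which $(p_1+q_1)$-sphere you intend to cap off, nor why the resulting knot $f$ would satisfy $g_i=\sigma^*f$ for \emph{both} $i$ (with your sign convention this cannot happen unless $g_1=g_2$). The paper instead produces explicit one-sided inverses: the standard-surgery retractions $\bar\sigma_i^*\colon E^m(S^{p_i}\times S^{q_i})\to E^m(S^{p_1+q_1})$ from Claim~\ref{cl1}, and an analogous map $\bar a_1\colon E^m(T)\to E^m(S^{p_1}\times S^{q_1})$ obtained by $c_2$-standardizing and performing the standard surgery along the second meridian. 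From the identities $\bar a_1(f_1\#f_2)=f_1\#\bar\sigma_2^*f_2$ and $\bar\sigma_1^*\bar a_1(f_1\#f_2)=\bar\sigma_1^*f_1\#\bar\sigma_2^*f_2$ one reads off algebraically that $f_1\#f_2=s_1\#s_2$ forces $(f_1,f_2)=\sigma^*(\bar\sigma_1^*f_1)$. This avoids any analysis of a trivializing isotopy. Finally, note that the paper treats $E^m(S^{p_1}\times S^{q_1}\#S^{p_2}\times S^{q_2})$ only as a pointed set, not a group, so ``exactness'' means precisely that $a$ is constant on $\image\sigma^*$ and $\sigma^*$ surjects onto $a^{-1}(s_1\#s_2)$.
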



Here ${{E}^m(S^{p_1}\times S^{q_1} \,\# \,S^{p_2}\times S^{q_2})}$ is a set with the marked element --- the connected sum of two embeddings $S^{p_i}\times S^{q_i}\to S^m$ isotopic to the standard ones and separated by a hyperplane.
This result and Lemma~\ref{cl1} above imply that if at least one set ${{E}^m(S^{p_i}\times S^{q_i})}$ is infinite then the set
${E}^m(S^{p_1}\times S^{q_1}\, \# \, S^{p_2}\times S^{q_2})$ is infinite. 



For the proof of Theorem~\ref{thconsum} we need a definition and a lemma.
Let $N$ be a closed connected $n$-manifold. Denote by $c\colon  S^p\times D^{n-p}_- \to N$ a fixed embedding.
A map $f \colon  N \to S^m$ is called \emph{$c$-standardized}, if
\begin{itemize}
\item $fc \colon  S^p \times D^{n-p}_- \to D^m_-$ is standard; 
\item $f(N-c(S^p\times D^{n-p}_-))\subset \interior D^m_+$.
\end{itemize}
An isotopy $f_t \colon  N\to S^m$ is {\it $c$-standardized},
if for each $t\in I$ the embedding $f_t$ is $c$-standardized.

\begin{lemma}\label{lstand} \textup{\cite[Standardization Lemma]{Sko07F}} Assume that $m> n+p+2$. Then
any embedding
$N\to S^m$ is isotopic to a $c$-standardized embedding.
%
\end{lemma}

This lemma allows to define an action $\#_c\colon E^m(S^p\times S^q)\times E^m(N)\to E^m(N)$
analogously to the group structure on $E^m(S^p\times S^q)$ in \S\ref{sectprem}, see \cite{Sko07F} for details. However, we do not need this action for our proof.


\begin{proof}[Proof of Theorem~\ref{thconsum}]
Fix two embeddings  $s_i\colon S^{p_i}\times S^{q_i}\to S^m$  isotopic to the standard ones and separated by a hyperplane. Let $c_i\colon  S^{p_i}\times D^{q_i}_-\to S^{p_1}\times S^{q_1} \,\# \,S^{p_2}\times S^{q_2}$ be the obvious inclusions.

The map $a\colon {E}^m(S^{p_1}\times S^{q_1}) \oplus {E}^{m}(S^{p_2}\times S^{q_2})\to {E}^m(S^{p_1}\times S^{q_1}\# S^{p_2}\times S^{q_2})$
is the embedded connected summation.
The homomorphism $\sigma^*\colon E^m(S^{p_1+q_1})\to {E}^m(S^{p_1}\times S^{q_1}) \oplus {E}^{m}(S^{p_2}\times S^{q_2})$
is defined by the formula $\sigma^*(f)=(s_1\#f,s_2\#(-f))$, where $-f=r_m fr_{p_1+q_1}$ is the inverse in the group $E^m(S^{p_1+q_1})$.

To prove the exactness we need to show that $a\sigma^*$ is a constant map to the marked element and $\sigma^*$ surjects onto the preimage of the marked element.

For any $f\in E^m(S^{p+q})$ the embedding $a\sigma^*(f)=s_1\#f\#s_2\#(-f)=s_1\#s_2$ is the marked element. 

Let us prove that $\sigma^*$ surjects onto the preimage of $s_1\#s_2$. Let $\bar\sigma^*_i\colon {E}^m(S^{p_i}\times S^{q_i})\to E^m(S^{p+q})$, where $i=1,2$,
be the map defined in the proof of Lemma~\ref{cl1}.

Let us define also a map $\bar a_1\colon {{E}^m(S^{p_1}\times S^{q_1} \,\# \,S^{p_2}\times S^{q_2})} \to {{E}^m(S^{p_1}\times S^{q_1})}$.  Take an embedding $f\colon S^{p_1}\times S^{q_1} \,\# \,S^{p_2}\times S^{q_2}\to S^m$.
By Lemma~\ref{lstand} it is isotopic to a $c_2$-standardized embedding $f'\colon S^{p_1}\times S^{q_1} \,\# \,S^{p_2}\times S^{q_2}\to S^m$.
Perform the standard embedded surgery over $f'$ along the meridian $f'c_2(S^{p_2}\times y)$, $y\in S^{q_2}$; see \S2. Set $\bar a_1(f)\colon S^{p_1}\times S^{q_1}\to S^m$  to be the obtained  embedding.

Take any pair of embeddings $f_i\colon S^{p_i}\times S^{q_i}\to S^m$, where $i=1,2$. It is easy to see that
$\bar a_1(f_1\# f_2)=f_1\# \bar\sigma_2^* f_2$ and $\bar\sigma_1^*\bar a_1(f_1\#f_2)=\bar\sigma_1^* f_1\# \bar\sigma_2^* f_2$. Now assume that $a(f_1,f_2)=f_1\#f_2=s_1\#s_2$ is the marked element. Then $\bar a_1(f_1\# f_2)=s_1$ and $\bar\sigma_1^*\bar a_1(f_1\#f_2)=0$. Thus $f_1\# \bar\sigma_2^* f_2=s_1$ and
$\bar\sigma_1^* f_1\# \bar\sigma_2^* f_2=0$. Hence $f_1=s_1\#(-\bar\sigma_2^* f_2)=s_1\#(\bar\sigma_1^*f_1)$.
Analogously $f_2=s_2\#(-\bar\sigma_1^* f_1)$. So $(f_1,f_2)=\sigma^*(\bar\sigma_1^*f_1)$ belongs to the image of $\sigma^*$, which proves the theorem.
\end{proof}

\subsection*{Relation to another exact sequence}

\begin{remark}\label{rem-desuspension}
The exact sequence of Theorem~\ref{th3} above is mapped to the middle horizontal sequence of \cite[Restriction Lemma 5.2]{Sko08} as follows. Use the notation from \cite{Sko08}. First, the two sequences in question have a common term $E^m(D^p \times S^q)=KT^m_{p,q,+}$. Further, there is an obvious 'forgetful' map $e\colon E^m(S^p \times S^q)\to \overline{KT}^m_{p,q}$; see \cite[Section~2.5 and Theorem~3]{CRS08} (in the latter paper the notation $\bar E^m(S^p \times S^q)=\overline{ KT}^m_{p,q}$ is used). Finally, the 'linking number' map $\lambda\colon E^m_0(S^{p+q}\sqcup S^q)\to\pi_{p+q}(S^{m-q-1})$ takes a link to the homotopy class of the $(p+q)$-dimensional component in the complement to the $q$-dimensional one.

The maps $e$ and $\lambda$ together with the two exact sequences themselves form a diagram, the commutativity of which follows directly by definitions. The maps $e$ and $\lambda$ are isomorphisms for $m\ge 3(p+q)/2+2$ by the classification results of \cite{Sko02} and \cite{Hae66C} respectively. Thus the two exact sequences are isomorphic for $m\ge 3(p+q)/2+2$.

In general one sequence is mapped to the other. The maps $e$ and $\lambda$ have many instrumental properties analogous to those of the suspension map, e.g., the existence of exact EHP sequences \cite[Lemma 2]{CRS08}, \cite[Theorem 3.1]{Sko08P} (the map $e$ in the latter theorem coincides with $\lambda$ up to isomorphism). So the exact sequence of Theorem~\ref{th3} above can be informally considered as a 'desuspension' of the middle horizontal sequence of \cite[Restriction Lemma 5.2]{Sko08}.

The main new ingredients in the proof of Theorem~\ref{th3} in comparison to \cite[Restriction Lemma~5.2]{Sko08} are the constructions of the maps $\sigma^*$ and $\partial^*$ (see Figures~\ref{fig6} and~\ref{altdelta} respectively) and removing the intersection $f(B)\cap s(\interior D^{p+1}\times S^q)$ (see Figure~\ref{exactness1}). 
\end{remark}

\subsection*{Underwater reefs}


Let us make a few corrections to the previous paper \cite{CRS08} on the subject.

The definition of the standard embedding $S^p\times D^q_+\to D^m_+$ in \cite[Definition~6]{CRS08} was incorrect because it did not give a proper embedding. A correct definition is given above in \S2.

The construction of the group structure on the set of knotted tori in \cite[\S5]{CRS08} was incomplete because the analogues of Lemmas~\ref{l1}--\ref{l2} were not proved.
A complete construction is given in~\cite{Sko15}.


Step 1) of the proof of \textup{\cite[Lemma~1]{CRS08}} used the following result without proof, which we give now.

\begin{lemma} Assume that $m\ge p+\frac{4}{3}q+2$ and $f\colon S^p\times D^q_+\to D^m_+$ is a proper general position smooth map (possibly, with self-intersections). Then  the suspension map
$$
\Sigma^\infty\colon\pi_q(D^m_+-f(S^p\times D^q_+),\partial D^m_+-f(S^p\times \partial D^q_+))\to \{S^q,S^m-f(S^p\times D^q_+)\}
$$
is bijective.
\end{lemma}


\begin{proof}
By the Alexander duality the complement $\partial D^m_+-f(S^p\times \partial D^q_+)$ is $(m-p-q-1)$-connected. By \cite[Proposition~5]{CRS08} the pair $(D^m_+-f(S^p\times D^q_+),\partial D^m_+-f(S^p\times \partial D^q_+)$ is $c$-connected, where $c=\min\{m-p-2,2m-2p-2q-3\}$. The assumption $m\ge p+\frac{4}{3}q+2$ implies that $q<c+m-p-q-1$ and $q/2\le c$.
Then by homotopy excision theorem for each $r\le q$ the map $\pi_r(D^m_+-f(S^p\times D^q_+),\partial D^m_+-f(S^p\times \partial D^q_+))\to \pi_r(S^m-f(S^p\times \partial D^q_+))$
is bijective. In particular, $S^m-f(S^p\times D^q_+)$ is $q/2$-connected. Then by suspension theorem $\pi_q(S^m-f(S^p\times \partial D^q_+))\to \{S^q,S^m-f(S^p\times D^q_+)\}$
is bijective, and the lemma follows.
\end{proof}

\subsection*{Open problems}
There are many similar open questions (by A.~Skopenkov):
\begin{enumerate}
\item Does the set of {\it piecewise linear} embeddings $S^p\times S^q\to S^m$ up to piecewise linear isotopy admit a natural group structure for each $m> p+q+2$? Can the restriction $m> 2p+q+2$ be weakened to $m> p+q+2$ in Theorem~\ref{th3} in the piecewise linear category?

\item How many embeddings $S^1\times S^5\to S^{10}$ are there up to isotopy? (This problem have been recently solved  \cite[end of \S2]{Sko15}.) Find more explicit classification results.

\item Is it true that for $p\ge 1$ and $m>2p+q+2$ there is an isomorphism
    $$
    E^m(S^p\times S^q)\cong
    E^m(D^{p+1}\times S^q)\oplus E^m(S^{p+q})\oplus\kernel\lambda,
    $$
    where $\lambda\colon E^m_{\mathrm{U}}(S^{p+q}\sqcup S^q)\to\pi_q(S^{m-p-q-1})$ is the linking number?
\item When is the set of embeddings finite? Find more finiteness results for the sets $E^m(N)$.
\end{enumerate}

\subsection*{Acknowledgements}

The author is grateful to A. Skopenkov for continuous attention to the work and also to P. Akhmetiev,  D. Crowley, G. Laures, U. Kaiser, U. Koschorke, S. Melikhov, A. Mischenko, V.~Nezhinsky, and A.~Zhubr for useful discussions. The author is grateful to King Abdullah University of Science and Technology for hosting him during one of the periods of the work over the paper.

\bibliographystyle{amsplain}


\noindent
\textsc{Mikhail Skopenkov\\
National Research University Higher School of Economics\\
and\\
Institute for information transmission problems\\
of the Russian Academy of Sciences \\
}
\texttt{skopenkov@rambler.ru} \quad \url{http://skopenkov.ru}

\end{document}